
\documentclass{daj}

\usepackage{amsmath,amssymb,amsthm,bm}

\newcommand{\ssquare}{\scalebox{0.6}{$\square$}}
\newcommand{\mb}{\mathbf}
\newcommand{\Bias}{\operatorname{Bias}}
\newcommand{\Hamm}{\operatorname{Hamm}}
\newcommand{\Bohr}{\operatorname{Bohr}}
\newtheorem{theorem}{Theorem}
\newtheorem{lemma}[theorem]{Lemma}
\newtheorem{proposition}[theorem]{Proposition}
\newtheorem{corollary}[theorem]{Corollary}
\numberwithin{theorem}{section}
\numberwithin{equation}{section}

\theoremstyle{definition}

\newtheorem{observation}[theorem]{Observation}
\newtheorem{definition}[theorem]{Definition}
\newtheorem{problem}[theorem]{Problem}
\newtheorem{remark}[theorem]{Remark}
\newtheorem{example}[theorem]{Example}

\dajAUTHORdetails{%
  title = {Separating Bohr Denseness from Measurable Recurrence}, 
  author = {John T. Griesmer},
  plaintextauthor = {John T. Griesmer},
    %
    %
    %
  runningtitle = {Separating Bohr denseness from recurrence},
    %
    %
   %
  keywords = {Bohr topology, measurable recurrence},
}   

\dajEDITORdetails{%
   year={2021},
   number={9},
   received={23 June 2020},   
   published={2 September 2021},  
   doi={10.19086/da.26859},       
}   

\begin{document}

\begin{frontmatter}[classification=text]


\author[jtg]{John T. Griesmer}

\begin{abstract}
		We prove that there is a set of integers $A$ having positive upper Banach density whose difference set $A-A:=\{a-b:a,b\in A\}$ does not contain a Bohr neighborhood of any integer, answering a question asked by Bergelson, Hegyv\'ari, Ruzsa, and the author, in various combinations.   In the language of dynamical systems, this result shows that there is a set of integers $S$ which is dense in the Bohr topology of $\mathbb Z$ and which is not a set of measurable recurrence.	

Our proof yields the following stronger result: if $S\subseteq \mathbb Z$ is dense in the Bohr topology of $\mathbb Z$, then there is a set $S'\subseteq  S$ such that $S'$ is dense in the Bohr topology of $\mathbb Z$ and for all $m\in \mathbb Z,$ the set $(S'-m)\setminus \{0\}$ is not a set of measurable recurrence.
\end{abstract}
\end{frontmatter}


\section{Introduction}

\subsection{Difference sets} As usual $\mathbb Z$ denotes the set of integers, $\mathbb R$ denotes the real numbers with the usual topology, and $\mathbb T$ denotes $\mathbb R/\mathbb Z$ with the quotient topology.  For $A, B\subseteq \mathbb Z$, we let $A+B$ denote the \emph{sumset} $\{a+b: a\in A, b\in B\}$ and $A-A$ the \emph{difference set} $\{a-b: a, b\in A\}$.  If $c\in \mathbb Z$ the \emph{translate} of $A$ by $c$ is $A-c:=\{a-c:a\in A\}$.  The \emph{Bohr topology} of $\mathbb Z$ is the weakest topology on $\mathbb Z$ making all homomorphisms from $\mathbb Z$ into $\mathbb T$ continuous. We call neighborhoods in this topology \emph{Bohr neighborhoods}; see \S\ref{sec:BohrNhoods} for an explicit description.  We say that $S$ is \emph{Bohr dense} if $S$ is dense with respect to the Bohr topology.  We write $d^*(A)$ for the upper Banach density of a set of integers $A$, defined as $d^*(A):=\limsup_{n\to \infty} \sup_{k\in \mathbb Z} \frac{|A\cap \{k+1,\dots,k+n \}|}{n}$.
	
The following problem was posed first in \cite{RuzsaBook} and subsequently in \cite{BergelsonRuzsa}, \cite{GriesmerIsr}, and \cite{HegyvariRuzsa}.

\begin{problem}\label{q:Main}
	Prove or disprove: for all $A\subseteq \mathbb Z$ having $d^*(A)>0$, there is an $n\in \mathbb Z$ such that $A-A$ contains a Bohr neighborhood of $n.$
\end{problem}

Our main result disproves the statement in Problem \ref{q:Main}.

\begin{theorem}\label{thm:Main}
For all $\varepsilon>0$, there are sets $S, A\subseteq \mathbb Z$ such that $S$ is dense in the Bohr topology of $\mathbb Z$, $d^*(A)>\frac{1}{2}-\varepsilon$, and $(A-A)\cap S=\varnothing$.
\end{theorem}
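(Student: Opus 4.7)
The bound $\frac{1}{2} - \varepsilon$ is essentially optimal: if $d^*(A) > \frac{1}{2}$, then a direct pigeonhole on any sequence of intervals along which the density is achieved forces $A - A = \mathbb{Z}$, precluding $(A - A) \cap S = \varnothing$ for any nonempty $S$. So the construction must operate at this critical threshold. My first reduction would be to observe that once we produce $A$ with $d^*(A) > \frac{1}{2} - \varepsilon$ such that $A - A$ contains no Bohr neighborhood of any integer, we may take $S := \mathbb{Z} \setminus (A - A)$: this $S$ is Bohr dense (every Bohr open set $n + B$ meets $S$, since $n + B \not\subseteq A - A$) and is disjoint from $A - A$ by construction. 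So the whole task reduces to producing such an $A$.

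A one-scale construction cannot work. If $A = \phi^{-1}(U)$ for a homomorphism $\phi : \mathbb{Z} \to \mathbb{T}^k$ with dense image and $U \subseteq \mathbb{T}^k$ an open set of Haar measure just below $\frac{1}{2}$, then $A - A = \phi^{-1}(U - U)$, and by Steinhaus' theorem $U - U$ contains a neighborhood of $0 \in \mathbb{T}^k$, so $A - A$ contains a Bohr neighborhood of $0$. Consequently, $A$ must be ``unstructured'' relative to every single Kronecker factor. My plan is therefore a multi-scale construction. I would fix a rapidly growing chain of moduli $N_1 \mid N_2 \mid \cdots$ together with a nested family $F_1 \subseteq F_2 \subseteq \cdots$ of finite subsets of $\mathbb{T}$ whose union is dense. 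At stage $i$, I would construct $A_i \subseteq \mathbb{Z}/N_i\mathbb{Z}$ of density at least $\frac{1}{2} - \varepsilon_i$, using an independent-set argument in the Cayley graph $\Cay(\mathbb{Z}/N_i\mathbb{Z}, D_i)$, where $D_i$ is a ``forbidden-difference'' set whose lift to $\mathbb{Z}$ meets every Bohr neighborhood defined by $F_i$. The $A_i$ would then be combined across scales, via an inverse limit or compactness argument, to produce $A \subseteq \mathbb{Z}$ whose difference set avoids every Bohr neighborhood of every integer.

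The main obstacle I anticipate is the coherence of this construction across scales. At each stage the density is at the extremal value for the Cayley-graph independent-set problem, so there is essentially no slack. When refining from $N_i$ to $N_{i+1}$, the new forbidden set $D_{i+1}$ must essentially contain the pullback of $D_i$, so that an extremal $A_i$ extends to a near-extremal $A_{i+1}$; this is a delicate compatibility condition because enlarging $F_i$ forces $D_{i+1}$ to hit more and more Bohr neighborhoods, which tends to shrink the available independent sets. Balancing the growth of the $N_i$, the structure of the $D_i$, and the rate at which $F_i$ exhausts the Bohr topology, while keeping the final $d^*(A)$ above $\frac{1}{2} - \varepsilon$, is where I expect the bulk of the technical work.
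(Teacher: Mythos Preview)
Your reduction to constructing $A$ with $A-A$ omitting every Bohr neighborhood is valid, and the Steinhaus observation correctly rules out any one-scale construction. But what you have is a plan, not a proof: the entire content of the theorem lies in the step you yourself flag as ``the bulk of the technical work,'' and your outline supplies no mechanism for it. Two concrete issues. First, exhausting the Bohr topology by a countable family $F=\bigcup F_i$ dense in $\mathbb{T}$ is the wrong parametrization: Bohr neighborhoods are indexed by $\bm\alpha\in\mathbb{T}^d$ for arbitrary $d$, and density of single frequencies in $\mathbb{T}$ gives no uniform control over rank-$d$ Bohr sets for $\bm\alpha$ outside your list. The right exhaustion parameter is the \emph{rank}. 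Second, and more seriously, you assert the existence of independent sets of density near $\tfrac12$ in $\Cay(\mathbb{Z}/N_i\mathbb{Z},D_i)$ with $D_i$ hitting more and more Bohr neighborhoods, together with coherence between stages, but provide nothing beyond the name ``independent-set argument.'' That \emph{is} the theorem.

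The paper resolves both issues by reversing your order of construction: it builds $S$ inductively, not $A$. Given a finite $\delta'$-nonrecurrent $S_{k-1}$ (with $\delta<\delta'<\tfrac12$), one produces a \emph{Bohr-Hamming ball} $BH(\bm\alpha;k,\eta)=\{n:\text{at most }k\text{ coordinates of }n\bm\alpha\text{ have }\|\cdot\|\geq\eta\}$ in a high-dimensional torus such that $S_{k-1}+BH$ is still $\delta'$-nonrecurrent. The nonrecurrence is witnessed by a pair of nested Rohlin towers of height $p$ for a rotation on $\mathbb{T}^d$, whose levels come from ``bias cells'' in $(\mathbb{Z}/p\mathbb{Z})^d$; this product structure is exactly the high-dimensional combinatorics your Cayley-graph sketch is missing. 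Crucially, $BH$ is $k$-Bohr dense---it meets every rank-$\leq k$ Bohr neighborhood for \emph{every} $\bm\beta\in\mathbb{T}^k$, not merely those near a fixed countable list---so the union $S=\bigcup_k S_k$ is Bohr dense, and compactness for nonrecurrence then produces the witness $A$ at the end. Your coherence problem dissolves because nothing needs to be threaded through all scales: each inductive step is a self-contained application of a single Rohlin-tower extension lemma.
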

The set $A-A$ in Theorem \ref{thm:Main}  does not contain a Bohr neighborhood, since $S\cap B\neq \varnothing$ for every Bohr neighborhood $B$.

If $\delta\geq 0$, we say  $S$ is \emph{$\delta$-nonrecurrent} if there is a set $A\subseteq \mathbb Z$ having $d^*(A)>\delta$ and $(A-A)\cap S=\varnothing$.  The proof of Theorem \ref{thm:Main} yields the following stronger statement.

\begin{theorem}\label{thm:Strong}  If $S\subseteq \mathbb Z$ is Bohr dense and $\delta<\frac{1}{2}$, then there is a Bohr dense $\delta$-nonrecurrent subset $S'\subseteq S$.
\end{theorem}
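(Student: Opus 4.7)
The plan is to construct $A$ and $S'$ together by an inductive process, setting $S' := \{s_k : k \geq 1\}$ for integers $s_k \in S$ selected stage by stage. Fix a countable family $(B_k)_{k \geq 1}$ of Bohr neighborhoods that together witness Bohr density (for example, indexed by finite subsets of characters from a countable dense subset of $\mathrm{Hom}(\mathbb Z, \mathbb T)$, rational tolerances, and integer centers). I would arrange $s_k \in B_k$ at every stage, so that $S' = \{s_k\}$ is automatically Bohr dense, and simultaneously build $A$ so that $s_k \notin A - A$ for all $k$ and $d^*(A) > \delta$.

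The central observation driving the construction is the following: for any integer $Q$ and any $\eta > 0$ with $1/2 - \eta > \delta$, the arithmetic progression pattern $A_Q := \{n \in \mathbb Z : n \bmod Q \in [0, \lfloor (1/2 - \eta)Q \rfloor]\}$ has $d^*(A_Q) = 1/2 - \eta + o(1)$, while its difference set $A_Q - A_Q = \{n : n \bmod Q \in [-\lfloor(1/2-\eta)Q\rfloor, \lfloor(1/2-\eta)Q\rfloor]\}$ is exactly the complement in $\mathbb Z$ of the Bohr neighborhood $V_Q := \{n \in \mathbb Z : \{n/Q\} \in (1/2 - \eta, 1/2 + \eta)\}$. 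In particular, $s \notin A_Q - A_Q$ if and only if $s \in V_Q$. I would then run the construction along a tower of moduli $Q_1 \mid Q_2 \mid \cdots$, at stage $k$ selecting $s_k$ from the set $S \cap B_k \cap V_{Q_1} \cap \cdots \cap V_{Q_k}$, which is nonempty because the intersection is a Bohr neighborhood and $S$ is Bohr dense. The set $A$ itself is assembled so that after stage $k$ the restriction $A \cap [0, Q_k)$ is permanently committed to equal $A_{Q_k} \cap [0, Q_k)$, which forces $d^*(A) \geq 1/2 - \eta > \delta$ measured along the sequence of intervals $[0, Q_k)$.

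The main obstacle is coordinating three choices at each stage: (i) the choice of $Q_{k+1}$ as a large multiple of $Q_k$ for which all previously selected $s_1, \dots, s_k$ continue to lie in $V_{Q_{k+1}}$, which amounts to a simultaneous Diophantine condition $\{s_j / Q_{k+1}\} \approx 1/2$ for $j \leq k$; (ii) the extension of $A$ to $[Q_k, Q_{k+1})$ consistent with $A_{Q_{k+1}}$, while ensuring no cross-interval difference $a - b$ (with $a, b$ drawn from different committed stages) accidentally equals some $s_\ell$; and (iii) maintaining density above $\delta$ throughout. Step (i) is the most delicate, since the Diophantine condition cannot be solved for completely arbitrary $s_j$; the fix is to impose further Bohr-type constraints on each $s_k$ at the moment of its selection, tightening the Bohr neighborhood used in the intersection above so as to anticipate the coming moduli. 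This is feasible precisely because finite intersections of Bohr neighborhoods are Bohr neighborhoods, and Bohr denseness of $S$ guarantees each such shrinking intersection still meets $S$. The careful interplay between Diophantine control of the tower, consistency of the patterns on $[0, Q_k)$, and the Bohr denseness of $S$ is the technical heart of the proof.
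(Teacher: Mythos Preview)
Your construction of $A$ collapses at the very first consistency check. You commit $A\cap[0,Q_k)=A_{Q_k}\cap[0,Q_k)=[0,m_k]$ with $m_k=\lfloor(\tfrac12-\eta)Q_k\rfloor$, and then at the next stage $A\cap[0,Q_{k+1})=[0,m_{k+1}]$. Restricting the latter to $[0,Q_k)$ gives either $[0,Q_k)$ (if $m_{k+1}\ge Q_k$) or $[0,m_{k+1}]$ with $m_{k+1}\ne m_k$; either way this contradicts the stage-$k$ commitment. More fatally, even ignoring consistency, any $A$ containing the interval $[0,m_k]$ has $A-A\supseteq[-m_k,m_k]$, and since $m_k\to\infty$ this forces $A-A=\mathbb Z$. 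So no $s_j$ whatsoever can lie outside $A-A$. The ``fix'' you propose---imposing extra Bohr constraints on the $s_k$ to anticipate future moduli---addresses the wrong side of the problem: the obstruction is in the shape of $A$, not in the choice of the $s_k$. A single-modulus pattern $A_Q$ has difference set missing only one thin arithmetic annulus $V_Q$; stacking moduli in a tower does not produce a set whose difference set misses a Bohr-dense collection of points while keeping positive density.

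The paper avoids this entirely by \emph{not} building a single witnessing $A$. It constructs $S'=\bigcup_k S_k$ as an increasing union of finite sets, and for each $S_k$ exhibits a \emph{different} measure preserving system (a high-dimensional torus rotation with a carefully chosen pair of Rohlin towers) witnessing that $S_k$ is $\delta'$-nonrecurrent for some fixed $\delta'\in(\delta,\tfrac12)$. The inductive step (Lemma~\ref{lem:2Pieces}) shows that any finite $\delta'$-nonrecurrent set can be enlarged, inside $S$, to one that is $k$-Bohr dense; the device making this possible is the \emph{Bohr--Hamming ball}, which is simultaneously $k$-Bohr dense and can be absorbed into the base of a Rohlin tower (Proposition~\ref{prop:RohlinTd}). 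A compactness lemma (Lemma~\ref{lem:DensityCompactness}) then passes from ``every finite subset is $\delta'$-nonrecurrent'' to ``$S'$ is $\delta$-nonrecurrent.'' The single-$A$ strategy you outline is not known to work and, as written, cannot.
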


Repeatedly applying Theorem \ref{thm:Strong} produces the following corollary, showing that there are Bohr dense sets which are very far from being sets of \emph{measurable recurrence} -- see \S\ref{sec:Systems} for definition of this term.

\begin{corollary}\label{cor:Strong}
If $S\subseteq \mathbb Z$ is Bohr dense then there is a Bohr dense set $S'\subseteq S$ 	such that for all $m\in \mathbb Z$, the set $(S'-m)\setminus \{0\}$ is not a set of measurable recurrence.
\end{corollary}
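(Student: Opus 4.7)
The plan is to produce $S'$ by iterating Theorem \ref{thm:Strong} once for each $m\in\mathbb{Z}$, obtaining a decreasing chain of Bohr dense subsets of $S$, and taking their intersection.

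Enumerate $\mathbb{Z} = \{m_1, m_2, \ldots\}$ and fix $\delta \in (0, 1/2)$. Set $S_0 := S$, and at stage $i$ proceed as follows. Since the Bohr topology is translation invariant, the set $S_{i-1} - m_i$ is Bohr dense, so Theorem \ref{thm:Strong} produces a Bohr dense $\delta$-nonrecurrent subset $T_i \subseteq S_{i-1} - m_i$ with witness $A_i$ satisfying $d^*(A_i) > \delta$ and $(A_i - A_i) \cap T_i = \emptyset$; note that $0 \notin T_i$ since $\delta$-nonrecurrence forces this. Set $S_i := (T_i + m_i) \cup (\{m_i\} \cap S_{i-1}) \subseteq S_{i-1}$; then $S_i$ is Bohr dense, $m_i \in S_i$ whenever $m_i \in S_{i-1}$, and $(A_i - A_i) \cap (S_i - m_i) \subseteq \{0\}$, so by the Furstenberg correspondence principle $(S_i - m_i) \setminus \{0\}$ is not a set of measurable recurrence.

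Setting $S' := \bigcap_{i \geq 0} S_i$, for each $m = m_k$ the inclusion $S' \subseteq S_k$ yields $(A_k - A_k) \cap (S' - m_k) \subseteq \{0\}$, so $(S' - m) \setminus \{0\}$ is not a set of measurable recurrence.

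The main obstacle is ensuring $S'$ is Bohr dense, since decreasing intersections of Bohr dense sets need not remain so and the construction above tends to remove elements of $S_i$ at each stage. I would handle this by strengthening Theorem \ref{thm:Strong} to produce an $S'$ containing any prescribed finite set $F \subseteq S$ with $0 \notin F$: adjoin $F$ to the output of the theorem and thin the witness $A$ to $A^\star \subseteq A$ whose difference set avoids $F$, via a greedy independent-set argument in the graph on $A$ with edges $\{a, a+f\}$, $f \in F$ (maximum degree at most $2|F|$, yielding $d^*(A^\star) \geq d^*(A)/(2|F|+1) > 0$). Using this strengthening, at stage $i$ one additionally prescribes the finite set $F_i := \{m_k - m_i : k \leq g(i),\ m_k \in S_{i-1},\ k \neq i\}$ for some $g(i) \to \infty$, ensuring each $m_k$ is preserved in $S_i$ for every $i$ with $g(i) \geq k$; verifying that the surviving set meets every Bohr neighborhood is the most delicate step and may require a careful choice of $g$ together with further structural input from the proof of Theorem \ref{thm:Strong}.
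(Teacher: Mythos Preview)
Your diagnosis of the difficulty is correct: a decreasing intersection of Bohr dense sets need not be Bohr dense, and this is indeed the crux. However, your proposed repair does not close the gap. The finite sets $F_i$ you preserve are selected by position in an arbitrary enumeration of $\mathbb Z$, and such an enumeration carries no Bohr-topological information; there is no reason the elements that happen to survive the early stages should meet every Bohr neighborhood. To make a preservation strategy work you would instead have to choose, at each stage, a finite subset of $S_{i-1}$ that already witnesses $(d,\varepsilon)$-Bohr recurrence for growing $d$ and shrinking $\varepsilon$ (via Lemma~\ref{lem:BohrIntersectiveCompactness}), and then protect that subset through all later stages. Your last sentence essentially concedes this step is unresolved.

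The paper sidesteps the intersection problem entirely. After building the decreasing chain $S_0\supseteq S_1\supseteq\cdots$ of Bohr dense subsets of $S$ with $(S_n\pm n)\setminus\{0\}$ not measurably recurrent, it does \emph{not} take $\bigcap_n S_n$. Instead it invokes Lemma~\ref{lem:BohrSelective} to extract finite $R_n\subseteq S_n$ so that $S':=\bigcup_n R_n$ is Bohr dense. For each fixed $m$, all but finitely many $R_n$ lie in the $S_n$ constructed at the stage handling $m$, so $(S'-m)\setminus\{0\}$ is the union of a finite set with a subset of a known non-recurrent set, and Lemma~\ref{lem:Elementary} finishes. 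The idea you are missing is that $S'$ need not sit inside every $S_n$: it suffices that $S'\setminus S_n$ be finite, because adjoining finitely many nonzero integers never produces a set of measurable recurrence.
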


\begin{remark}\label{rem:DiffSetTop}
  In \cite{RuzsaBook}, Ruzsa defines the \emph{difference set topology} to be the  topology on $\mathbb Z$ generated by translates of sets of the form $A-A$, where $A\subseteq \mathbb Z$ has positive upper Banach density.  A set $S\subseteq \mathbb Z$ is a set of measurable recurrence if and only if $0$ lies in the closure of $S$ with respect to this topology, while $S-m$ is a set of measurable recurrence if and only if $m$ lies in the closure of $S$.  In these terms, Corollary \ref{cor:Strong} states that every Bohr dense set $S\subseteq \mathbb Z$ contains a Bohr dense subset $S'$ which is closed, discrete, and nowhere dense in the difference set topology.
\end{remark}

\section{Measure preserving systems; outline of proof}\label{sec:Systems}

\subsection{Measure preserving systems}  By \emph{measure preserving system} we mean a triple $(X,\mu,T)$ where $(X,\mu)$ is a probability measure space and $T:X\to X$ is an invertible transformation preserving $\mu$: for every measurable set $D\subseteq X$, $T^{-1}D$ is measurable and $\mu(T^{-1}D)=\mu(D)$.

We say that $S\subseteq \mathbb Z$ is a \emph{set of measurable recurrence} if for every measure preserving system $(X,\mu,T)$ and every measurable set $D\subseteq X$ with $\mu(D)>0$ there is an $n\in S$ such that $D\cap T^{n}D\neq \varnothing$.

Correspondence principles such as \cite[Proposition 3.1]{BHK} or \cite[Theorem 3.18]{Fbook} allow us to phrase the concept of $\delta$-nonrecurrence in terms of measure preserving systems.  Here is the correspondence principle we need for our proofs.

\begin{lemma}\label{lem:nonrecurrenceForms}
  Let $\delta>0$ and $S\subseteq \mathbb Z$.  The following are equivalent:

  \begin{enumerate}
    \item[(i)] There is a measure preserving system $(X,\mu,T)$ and $D\subseteq X$ with $\mu(D)>\delta$ such that $\mu(D\cap T^sD)=\varnothing$ for all $s\in S$.

    \item[(ii)] There exists $A\subseteq \mathbb Z$ with $d^*(A)>\delta$ such that $(A-A)\cap S=\varnothing$.

    \item[(iii)] There is a $\delta'>\delta$ such that for all $n\in \mathbb N$ there exists $A_n\subseteq \{0,\dots,n-1\}$  with $|A_n|\geq \delta'n$ and $(A_n-A_n)\cap S=\varnothing$.
  \end{enumerate}
\end{lemma}

The following lemma is crucial in constructions of $\delta$-nonrecurrent sets; it follows from Theorems 1 and  2 of \cite{RuzsaIntersectivity84}. It is  also a consequence of the proof of  \cite[Theorem 2.1]{ForrestThesis}.
\begin{lemma}\label{lem:DensityCompactness}  Let $\delta>0$.
Let $S\subseteq \mathbb Z$ and $0\leq \delta <\delta'$. If every finite subset of $S$ is $\delta'$-nonrecurrent, then $S$ is $\delta$-nonrecurrent.
\end{lemma}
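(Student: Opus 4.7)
The plan is to use Lemma \ref{lem:equiv} to convert both the hypothesis and the desired conclusion into measure-theoretic statements, then combine the systems associated to finite subsets of $S$ into a single limit system via weak-$*$ compactness on the space of shift-invariant probability measures on $\{0,1\}^{\mathbb Z}$.

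Since $S$ is countable, enumerate $S=\{s_1,s_2,\dots\}$ and set $F_k:=\{s_1,\dots,s_k\}$.  By hypothesis and the (i)$\Rightarrow$(ii) direction of Lemma \ref{lem:equiv}, each $F_k$ admits a measure preserving system $(X_k,\mu_k,T_k)$ and a measurable set $D_k\subseteq X_k$ with $\mu_k(D_k)>\delta'$ and $D_k\cap T_k^n D_k=\varnothing$ for every $n\in F_k$.  Using the symbolic coding $\phi_k:X_k\to \{0,1\}^{\mathbb Z}$ defined by $\phi_k(x)_n:=1_{D_k}(T_k^n x)$, push $\mu_k$ forward to a shift-invariant probability measure $\nu_k:=(\phi_k)_*\mu_k$ on $\{0,1\}^{\mathbb Z}$.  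Writing $[1]:=\{y:y_0=1\}$ and letting $\sigma$ denote the shift, one has $\nu_k([1])=\mu_k(D_k)>\delta'$ and $\nu_k([1]\cap \sigma^{-n}[1])=0$ for every $n\in F_k$ (the preimage of $[1]\cap\sigma^{-n}[1]$ under $\phi_k$ is literally empty).

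The space of shift-invariant probability measures on the compact metrizable space $\{0,1\}^{\mathbb Z}$ is weak-$*$ compact and metrizable, so pass to a convergent subsequence $\nu_{k_i}\to \nu$.  Because $[1]$ and $[1]\cap \sigma^{-n}[1]$ are clopen, their $\nu$-measures are exactly the limits of their $\nu_{k_i}$-measures, giving $\nu([1])\geq \delta'$ and $\nu([1]\cap\sigma^{-n}[1])=0$ for every $n\in S$ (for each fixed $n$, the vanishing holds once $n\in F_{k_i}$).  Let $X$ be the topological support of $\nu$, let $D:=[1]\cap X$, and let $T:=\sigma|_X$.  On $X$ every clopen $\nu$-null set is empty, so $D\cap T^n D=\varnothing$ for all $n\in S$, while $\nu(D)\geq \delta'>\delta$.

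Finally, apply the (ii)$\Rightarrow$(i) direction of Lemma \ref{lem:equiv} to the system $(X,\nu,T)$ and the set $D$ to produce $A\subseteq \mathbb Z$ with $d^*(A)>\delta$ and $(A-A)\cap S=\varnothing$, proving that $S$ is $\delta$-nonrecurrent.  The one point requiring care is the passage from the measure-theoretic relation $\nu([1]\cap\sigma^{-n}[1])=0$ to the literal disjointness demanded by Lemma \ref{lem:equiv}; this is handled uniformly in $n$ by restricting to the topological support of the limiting measure, where clopen null sets disappear.
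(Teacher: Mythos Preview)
Your argument is correct.  The paper's own proof takes a different route: it forms the ultraproduct of the systems $(X_k,\mu_k,T_k)$, equips it with Loeb measure, and uses the internal set $\mathbf{D}=\prod_{k\to\mathcal U}D_k$.  In that setting literal disjointness of $\mathbf D$ and $\mathbf T^s\mathbf D$ falls out directly, since an ultraproduct of empty sets is empty; no passage to a topological support is needed.  Your approach instead codes everything into $\{0,1\}^{\mathbb Z}$, uses weak-$*$ compactness of shift-invariant probability measures, and then restricts to the support to upgrade $\nu([1]\cap\sigma^{-n}[1])=0$ to honest emptiness.  Your method is more elementary in that it avoids nonstandard analysis and stays entirely within classical ergodic theory and point-set topology; the price is the extra bookkeeping with the symbolic factor and the support.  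The ultraproduct proof, by contrast, handles the limit and the disjointness in one stroke but presupposes the Loeb-measure machinery.  Both constructions produce a system with $\mu(D)\ge\delta'>\delta$, which is why the hypothesis needs the strict inequality $\delta<\delta'$ rather than mere $\delta'$-nonrecurrence of the finite subsets.
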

Lemmas \ref{lem:nonrecurrenceForms} and \ref{lem:DensityCompactness} are well known but rarely collected together and stated as we have here, so we prove them in \S\ref{sec:Appendix}.

\subsection{Torus rotations and Rohlin towers} Fixing $d\in \mathbb N$ and $\boldsymbol\alpha\in \mathbb T^d$, the corresponding \emph{torus rotation} is the measure preserving system $(\mathbb T^d,\mu,R)$, where $R\mb x = \mb x + \bm\alpha$ and $\mu$ is Haar probability measure on $\mathbb T^d$.  We say that $(\mathbb T^d,\mu,R)$ is \emph{minimal} if $\{n\bm\alpha:n\in \mathbb Z\}$ is dense in $\mathbb T^d$.

A \emph{Rohlin tower} for a measure preserving system $(X,\mu,T)$ is a collection of mutually disjoint measurable subsets of $X$ having the form $\mathcal T=\{E, TE, T^2E, \dots, T^{N-1}E\}$.  We say the tower has \emph{base} $E$, \emph{height} $N$, and we call the elements of $\mathcal T$ the \emph{levels} of $\mathcal T$.  A set $D\subseteq X$ is \emph{$\mathcal T$-measurable} if $D$ is a union of levels of $\mathcal T$.

From now on we write $[N]$ for the interval $\{0,\dots, N-1\}$ in $\mathbb Z$.  If $S\subseteq \mathbb Z$ is a finite $\delta$-nonrecurrent set and $\mathcal T$ is a Rohlin tower of height $N$ and base $E$, we say that $\mathcal T$ \emph{witnesses the $\delta$-nonrecurrence of $S$} if there is a set $A\subseteq [N]$ such that $A+S\subseteq [N]$, $A\cap (A+S)=\varnothing$, and $|A|\mu(E)>\delta$.  Note that this implies $D:=\bigcup_{n\in A} T^nE$ satisfies $\mu(D)>\delta$ and $D\cap T^sD=\varnothing$ for all $s\in S$.

\subsection{Extending \texorpdfstring{$\delta$}{d}-nonrecurrent sets with pairs of Rohlin towers} Proposition \ref{prop:RohlinTd} provides a special class of Rohlin towers which are the focal point of our main argument.  Lemma \ref{lem:RohlinExtend} indicates that such towers can be used to construct $\delta$-nonrecurrent sets with prescribed properties.  

\begin{lemma}\label{lem:RohlinExtend}
Let $S\subseteq \mathbb Z$ be finite, $\delta>0$, and let $(X,\mu,T)$ be a measure preserving system. Let
\[\mathcal T = \{E, TE,\dots, T^{N-1}E\}, \qquad \mathcal T'=\{E', TE',\dots, T^{N-1}E'\}
\] be Rohlin towers for $T$ with $E\subseteq E'$ and define $S':=\{n\in \mathbb Z: T^nE\subseteq E'\}$.   If $\mathcal T$ witnesses the $\delta$-nonrecurrence of $S$, then $S\cup (S+S')$ is $\delta$-nonrecurrent.
\end{lemma}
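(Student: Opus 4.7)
The plan is to construct the target set $D$ directly in $(X,\mu,T)$ and then invoke Lemma~\ref{lem:equiv}. Let $A\subseteq [N]$ be the set witnessing the $\delta$-nonrecurrence of $S$ via $\mathcal T$, so $A+S\subseteq [N]$, $A\cap(A+S)=\varnothing$, and $|A|\mu(E)>\delta$. Define
\[
D:=\bigcup_{m\in A}T^mE.
\]
Since the levels of $\mathcal T$ are disjoint, $\mu(D)=|A|\mu(E)>\delta$. To finish it suffices to show $D\cap T^nD=\varnothing$ for every $n\in S\cup(S+S')$.

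For $n\in S$, the emptiness $D\cap T^nD=\varnothing$ is part of the definition of $\mathcal T$ witnessing the $\delta$-nonrecurrence of $S$. The substantive step is $n=s+s'$ with $s\in S$, $s'\in S'$. Here the key observation is that, because $s'\in S'$, we have $T^{s'}E\subseteq E'$, hence
\[
T^{s'}D \;=\; \bigcup_{m\in A}T^m(T^{s'}E)\;\subseteq\;\bigcup_{m\in A}T^mE',
\]
and therefore
\[
T^{s+s'}D \;\subseteq\;\bigcup_{m\in A}T^{s+m}E'.
\]
On the other hand, since $E\subseteq E'$,
\[
D\;\subseteq\;\bigcup_{m'\in A}T^{m'}E'.
\]
Both unions are subcollections of the $\mathcal T'$-tower: the indices $m'$ lie in $A\subseteq[N]$, and the indices $s+m$ lie in $A+S\subseteq[N]$ by hypothesis. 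Two levels $T^{m'}E'$ and $T^{s+m}E'$ of $\mathcal T'$ are disjoint unless $m'=s+m$, and that equality would force $m'\in A\cap(A+S)$, contradicting the witness hypothesis. Hence $D\cap T^{s+s'}D=\varnothing$.

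By Lemma~\ref{lem:equiv} applied to $(X,\mu,T)$ and $D$, we obtain a set $A^{\ssquare}\subseteq\mathbb Z$ with $d^*(A^{\ssquare})>\delta$ and $(A^{\ssquare}-A^{\ssquare})\cap(S\cup(S+S'))=\varnothing$, proving that $S\cup(S+S')$ is $\delta$-nonrecurrent. The main (and only) obstacle is the bookkeeping verifying that translating $D$ by $s'$ keeps it inside the levels of $\mathcal T'$ indexed by $A$; once this containment is in place, the disjointness $A\cap(A+S)=\varnothing$ within $\mathcal T'$ does all the work.
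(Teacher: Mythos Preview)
Your argument is correct and is essentially the paper's own proof: you build $D=\bigcup_{m\in A}T^mE$, then use $T^{s'}E\subseteq E'$ and $E\subseteq E'$ to push both $D$ and $T^{s+s'}D$ into the $\mathcal T'$-tower, where $A\cap(A+S)=\varnothing$ finishes the job. The only cosmetic difference is that the paper names the auxiliary set $D':=\bigcup_{m\in A}T^mE'$ explicitly and first records $D'\cap T^sD'=\varnothing$, whereas you work with the unions in place; the content is identical.
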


\begin{remark}
  The hypothesis  $E\subseteq E'$ in Lemma \ref{lem:RohlinExtend} implies $0\in S'$, and thus $S\subseteq S+S'$, so we could simply write ``$S+S'$ is $\delta$-nonrecurrent'' in the conclusion.  Instead, we want to emphasize that the new $\delta$-nonrecurrent set contains $S$.
\end{remark}

\begin{proof}
Assuming $\mathcal T$, $\mathcal T'$, and $\delta$ are as in the hypothesis, there is an $A\subseteq [N]$ such that $A+S\subseteq [N]$, $A\cap (A+S)=\varnothing$, and $|A|\mu(E)>\delta$.  Then
\[D:=\bigcup_{n\in A} T^nE, \qquad D':=\bigcup_{n\in A} T^nE'\]
 each have measure strictly greater than $\delta$, and the disjointness of the levels of $\mathcal T$ implies
\begin{equation}\label{eqn:DsD}
  D\cap T^sD=\varnothing \quad \text{and} \quad D'\cap T^sD'=\varnothing \quad \text{for all }  s\in S.
\end{equation}
To prove that $S\cup (S+S')$ is $\delta$-nonrecurrent it therefore suffices to prove
\begin{equation}\label{eqn:DssD}
D\cap T^{s+s'}D=\varnothing \quad \text{for all} \quad s\in S, s'\in S',
\end{equation}
so fix $s\in S$ and $s'\in S'$. Note that $D\subseteq D'$ since $E\subseteq E'$, and that $T^{s'}E\subseteq E'$, by the definition of $S'$.  Then $T^{s'}D=\bigcup_{a\in A} T^{a+s'} E\subseteq \bigcup_{a\in A} T^a E' = D'$.  Now $T^{s+s'}D\subseteq T^{s}D'$, so the containment $D\subseteq D'$ and the disjointness of $D'$ from $T^{s}D'$ implies $D\cap T^{s+s'}D=\varnothing$.  We have proved the lemma, as (\ref{eqn:DsD}) and (\ref{eqn:DssD}) imply that $S\cup (S+S')$ is $\delta$-nonrecurrent. \end{proof}

\subsection{Outline of the main argument}\label{sec:Outline} Lemma \ref{lem:RohlinExtend} forms the basis of an inductive construction of a $\delta$-nonrecurrent set which is Bohr dense.  This construction requires two compactness properties: first, that Bohr denseness can be approximated by \emph{$k$-Bohr denseness} (Definition \ref{def:BohrDenseness}), which in turn can be approximated using finite subsets of $\mathbb Z$ (Lemma \ref{lem:BohrIntersectiveCompactness}).  The corresponding compactness property for measurable recurrence is provided by Lemma \ref{lem:DensityCompactness}.  Starting with a finite $\delta$-nonrecurrent set $S_1$, we use Lemma \ref{lem:WitnessesExist} to find an $N\in \mathbb N$ and a finite set $A\subseteq [N]$ witnessing the $\delta$-nonrecurrence of $S_1$, meaning $|A|>\delta N$, $A\cap (A+
S_1)=\varnothing$, and $A+S_1\subseteq [N]$.  We then use Proposition \ref{prop:RohlinTd} to find a minimal torus rotation $(\mathbb T^d,\mu,R)$ and Rohlin towers $\mathcal T$, $\mathcal T'$ as in Lemma \ref{lem:RohlinExtend} with $|A|\mu(E)>\delta$ such that the set $\{n:R^nE\subseteq E'\}$ contains a \emph{Bohr-Hamming ball} $BH$ (Definition \ref{def:BH}), which itself is $k$-Bohr dense (Lemma \ref{lem:BHiskBohrDense}).  Lemma \ref{lem:RohlinExtend} then implies $S_1\cup (S_1+BH)$ is $\delta$-nonrecurrent.  Since $k$-Bohr denseness is translation invariant, we will get that $S_1\cup (S_1+BH)$ is $k$-Bohr dense, and Lemma \ref{lem:BohrIntersectiveCompactness} will allow us to chose a finite subset $S_2\subseteq S_1\cup (S_1+BH)$ which is approximately $k$-Bohr dense. Since $S_1$ is finite we may include $S_1$ in $S_2$.  Repeating this argument, we produce a sequence of sets $S_1\subseteq S_2\subseteq S_3\subseteq \cdots$ where each $S_k$ is approximately $k$-Bohr dense and $\delta$-nonrecurrent.  The union $\bigcup_{k\in \mathbb N} S_k$ will be the desired Bohr dense $\delta$-nonrecurrent set.

\subsection{Organization of the article}  The argument outlined in \S\ref{sec:Outline} is the main one used in the proof of Theorem \ref{thm:Main}; complete details are provided in \S\ref{sec:BHsubsec}. A superficial modification of this argument will prove Theorem \ref{thm:Strong} as well.  As Theorem \ref{thm:Main} is a special case of Theorem \ref{thm:Strong}, we address only the latter in the sequel.  Corollary \ref{cor:Strong} follows from a straightforward diagonalization based on repeated application of Theorem \ref{thm:Strong}.

In \S\ref{sec:BohrNhoods} we state definitions related to Bohr neighborhoods and prove some standard compactness properties regarding the Bohr topology. Section \ref{sec:BHsubsec} introduces \emph{Bohr-Hamming balls}, whose relevant properties  are recorded in Lemma \ref{lem:BHiskBohrDense} and Proposition \ref{prop:RohlinTd}; these are proved in \S\ref{sec:Independence} and \S\S\ref{sec:InFp}-\ref{sec:RohlinTowers}, respectively. Lemma \ref{lem:2Pieces} combines Lemma \ref{lem:BHiskBohrDense} and Proposition \ref{prop:RohlinTd} to form the inductive step in the proof of Theorem \ref{thm:Strong}.  The proofs of Theorem \ref{thm:Strong}  and Corollary \ref{cor:Strong} are presented immediately after the proof of Lemma \ref{lem:2Pieces}.

\section{Bohr neighborhoods}\label{sec:BohrNhoods}  We identify $\mathbb T$ with the interval $[0,1)\subseteq \mathbb R$ when defining elements and subsets of $\mathbb T$.
For $x\in \mathbb T$, let $\tilde{x}$ denote the unique element in $[0,1)$ such that $x = \tilde{x}+\mathbb Z,$ and define $\|x\|:=\min\{|\tilde{x}-n|: n\in \mathbb Z\}$.  For $d\in \mathbb N$ and $\mb x = (x_1,\dots, x_d)\in \mathbb T^d$, let $\|\mb x\|:=\max_{j\leq d} \|x_j\|$.

Fixing $d\in \mathbb N$, $\bm{\alpha}\in \mathbb T^d$, and  a nonempty open set $U\subseteq \mathbb T^d$, the \emph{Bohr neighborhood} determined by these parameters is
\[
B(\bm{\alpha}; U):= \{n\in \mathbb Z: n\bm\alpha\in U\}.
\]
We say that $B(\bm{\alpha};U)$ has  \emph{rank}  $d$.  Observe that $B(\bm{\alpha};U)$ may be empty, as we make no assumptions on $\bm{\alpha}$.  However, when $0_{\mathbb T^d}\in U$, $B(\bm{\alpha};U)$ is nonempty, as it contains $0$.  The \emph{Bohr topology} on $\mathbb Z$ is the weakest topology containing $B(\bm\alpha;U)$ for every $\bm\alpha\in \mathbb T^d$ and open $U\subseteq  \mathbb T^d$, for every $d\in\mathbb N$.

Given $\bm\alpha\in \mathbb T^d$ and $\varepsilon>0$, we define \[
\Bohr_0(\bm\alpha,\varepsilon):=\{n\in \mathbb Z: \|n\bm\alpha\|<\varepsilon\}
\]
to be a \emph{basic Bohr neighborhood of $0$} having \emph{rank}  $d$ and \emph{radius} $\varepsilon$.  These form a neighborhood base around $0$ for the Bohr topology.  For a given $n\in \mathbb Z$, the collection of translates
\[
\{B+n:B \text{ is a basic Bohr neighborhood of } 0\}
\]
forms a neighborhood base at $n$ in the Bohr topology.

\begin{example}\label{ex:Odd}
  The set of odd integers $B:=2\mathbb Z+1$ is the Bohr neighborhood $B(\alpha;U)$, determined by $\alpha=\frac{1}{2}\in\mathbb T$ and $U=\mathbb T\setminus \{0_{\mathbb T}\}$.   For every $\delta<\frac{1}{2}$, $B$ is $\delta$-nonrecurrent, since the set $2\mathbb Z$ of even integers has upper Banach density $\frac{1}{2}$, while $(2\mathbb Z-2\mathbb Z)\cap B=\varnothing$.
\end{example}

\begin{observation}\label{obs:Basic}
If $m\in B(\bm{\alpha};U)$, then the translate $B(\bm{\alpha};U)-m$ is a Bohr neighborhood of $0$, and therefore contains a basic Bohr neighborhood of $0$. Consequently, every nonempty Bohr neighborhood having rank at most $d$ contains a translate of a basic Bohr neighborhood of $0$ having rank at most $d$.
\end{observation}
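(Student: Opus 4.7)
The plan is to verify both assertions by directly unpacking the definition of $B(\bm\alpha, U)$ and using translation-invariance of the Haar structure on $\mathbb T^d$.

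First I would show that for $m\in B(\bm\alpha,U)$, the translate $B(\bm\alpha,U)-m$ equals $B(\bm\alpha, U-m\bm\alpha)$. Indeed, $n\in B(\bm\alpha,U)-m$ iff $(n+m)\bm\alpha \in U$ iff $n\bm\alpha \in U - m\bm\alpha$. Since $m\bm\alpha\in U$, we have $0_{\mathbb T^d}\in U - m\bm\alpha$, and $U - m\bm\alpha$ is open in $\mathbb T^d$ because translation by $-m\bm\alpha$ is a homeomorphism of $\mathbb T^d$. Hence $B(\bm\alpha,U) - m$ is a Bohr neighborhood containing $0$, of the same rank $d$ as $B(\bm\alpha,U)$.

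Next I would invoke the fact, stated in the paragraph introducing $\Bohr_0(\bm\alpha,\varepsilon)$, that the basic Bohr neighborhoods of $0$ form a neighborhood base at $0$ in the Bohr topology. Concretely, since $U - m\bm\alpha$ is an open neighborhood of $0_{\mathbb T^d}$, there exists $\varepsilon > 0$ such that $\{\mb x\in\mathbb T^d : \|\mb x\|<\varepsilon\} \subseteq U - m\bm\alpha$, and then $\Bohr_0(\bm\alpha,\varepsilon) \subseteq B(\bm\alpha, U-m\bm\alpha) = B(\bm\alpha,U) - m$. This is a basic Bohr neighborhood of $0$ of rank $d$, proving the first assertion.

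For the ``consequently'' clause, I would start with an arbitrary nonempty Bohr neighborhood $B$ of rank at most $d$, write $B = B(\bm\alpha,U)$ for some $\bm\alpha\in\mathbb T^{d'}$ with $d'\leq d$ and open $U\subseteq \mathbb T^{d'}$, pick any $m\in B$, and apply the first assertion to obtain a basic Bohr neighborhood $\Bohr_0(\bm\alpha,\varepsilon)$ of rank $d'\leq d$ contained in $B - m$. Translating back yields $m + \Bohr_0(\bm\alpha,\varepsilon) \subseteq B$, as required. There is no serious obstacle here; the only thing to handle carefully is the bookkeeping of ranks, which is immediate since all translations preserve the number of coordinates of $\bm\alpha$.
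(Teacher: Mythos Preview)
Your proof is correct and is exactly the standard argument one would give; the paper itself offers no proof for this observation, treating it as self-evident from the definitions. Your explicit identification $B(\bm\alpha,U)-m=B(\bm\alpha,U-m\bm\alpha)$ and the subsequent choice of $\varepsilon$ make precise what the paper leaves implicit.
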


\begin{definition}[Bohr denseness and its approximations]\label{def:BohrDenseness} We say that $S\subseteq \mathbb Z$ is

\begin{enumerate}
\item[$\cdot$] \emph{Bohr recurrent} if $S\cap B\neq \varnothing$ for every Bohr neighborhood of $0$.

\item[$\cdot$] \emph{$d$-Bohr recurrent} if $S\cap B\neq \varnothing$ for every Bohr neighborhood of $0$ having rank at most $d$.

\item[$\cdot$] \emph{$(d,\varepsilon)$}\emph{-Bohr recurrent} if $S\cap B\neq \varnothing$ for every basic Bohr neighborhood of $0$ with rank at most $d$ and radius at least $\varepsilon$.

\item[$\cdot$] \emph{Bohr dense} if $S\cap B\neq \varnothing$ for every nonempty Bohr neighborhood $B$.

\item[$\cdot$] \emph{$d$-Bohr dense} if $S\cap B\neq \varnothing$ for every nonempty Bohr neighborhood with rank at most $d$.  Equivalently, $S$ is $d$-Bohr dense if $S-m$ is $d$-Bohr recurrent for all $m\in \mathbb Z$.
\end{enumerate}
\end{definition}
The equivalence asserted in the last item above is due to Observation \ref{obs:Basic} and the fact that $(S-m)\cap B$ is a translate of $S\cap (B+m)$.  The next observation follows immediately from the relevant definitions and Observation \ref{obs:Basic}.

\begin{observation}\label{obs:Easy} Let $S\subseteq \mathbb Z$.  Then
\begin{enumerate}
	\item[(i)]
	$S$ is $(d,\varepsilon)$-Bohr recurrent if and only if for all $\bm\alpha\in\mathbb T^d$, there exists $s\in S$ such that $\|s\bm\alpha\|<\varepsilon$.
	
	\item[(ii)] 	$S$ is Bohr dense if and only if for all $d\in \mathbb N$, $\varepsilon>0$, and $m\in \mathbb Z,$ the set $S-m$ is $(d,\varepsilon)$-Bohr recurrent.
\end{enumerate}

\end{observation}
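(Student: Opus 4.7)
The observation is essentially an exercise in unfolding the definitions of $\Bohr_0(\bm\alpha,\varepsilon)$, $(d,\varepsilon)$-Bohr recurrence, $d$-Bohr denseness, and Bohr denseness, combined with Observation \ref{obs:Basic}. I will handle the two parts separately.

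For part (i), I would simply translate the definition. By definition, $\Bohr_0(\bm\alpha,\varepsilon) = \{n \in \mathbb Z : \|n\bm\alpha\| < \varepsilon\}$, so membership of some $s \in S$ in this set is literally the condition $\|s\bm\alpha\| < \varepsilon$. The forward direction is then immediate: if $S$ meets every basic Bohr neighborhood of $0$ of rank at most $d$ and radius at least $\varepsilon$, then in particular it meets $\Bohr_0(\bm\alpha,\varepsilon)$ for any $\bm\alpha \in \mathbb T^d$. For the reverse direction I need to handle basic neighborhoods of smaller rank $d' \leq d$ and larger radius $\varepsilon' \geq \varepsilon$. Given such a neighborhood $\Bohr_0(\bm\beta,\varepsilon')$ with $\bm\beta \in \mathbb T^{d'}$, I would pad $\bm\beta$ with zeros to obtain $\bm\alpha := (\bm\beta, 0, \dots, 0) \in \mathbb T^d$; then $\|s\bm\alpha\| = \|s\bm\beta\|$ for every $s$, so any $s \in S$ with $\|s\bm\alpha\| < \varepsilon \leq \varepsilon'$ witnesses $s \in \Bohr_0(\bm\beta,\varepsilon')$.

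For part (ii), I would chain together two equivalences. First, $S$ is Bohr dense iff $S$ is $d$-Bohr dense for every $d \in \mathbb N$, since every Bohr neighborhood has some finite rank. Second, by the last bullet of Definition \ref{def:BohrDenseness} (whose equivalence was justified via Observation \ref{obs:Basic}), $S$ is $d$-Bohr dense iff $(S-m) \cap B \neq \varnothing$ for every $m \in \mathbb Z$ and every Bohr neighborhood $B$ of $0$ of rank at most $d$. Unpacking this last condition over all radii $\varepsilon > 0$ of the basic Bohr neighborhoods gives exactly the statement that $S - m$ is $(d,\varepsilon)$-Bohr recurrent for every $d$, $\varepsilon$, and $m$. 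Combining these gives the claimed equivalence.

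There is no real obstacle here; the only subtlety is the zero-padding trick in part (i) to accommodate neighborhoods of rank smaller than $d$, and the reliance on Observation \ref{obs:Basic} to pass from arbitrary Bohr neighborhoods to translates of basic ones in part (ii). Both are routine.
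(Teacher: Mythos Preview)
Your proposal is correct and matches the paper's approach: the paper does not give a detailed argument but simply states that the observation follows immediately from the relevant definitions and Observation \ref{obs:Basic}, which is exactly what you have carefully unpacked. Your zero-padding step in part (i) and your chaining of the equivalences in part (ii) are precisely the routine verifications the paper leaves to the reader.
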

The next lemma is an instance of compactness required for our proofs.

\begin{lemma}\label{lem:BohrIntersectiveCompactness}  Let $d\in \mathbb N$.
	\begin{enumerate}
		\item[(i)] If $S\subseteq \mathbb Z$ is $d$-Bohr recurrent, then for all $\varepsilon>0$, there is a finite set $S'\subseteq S$ such that $S'$ is $(d,\varepsilon)$-Bohr recurrent.
		
		\item[(ii)] If $S\subseteq \mathbb Z$ is $d$-Bohr dense, then for all $M\in \mathbb N$ and all $\varepsilon>0$, there exists a finite set $S'\subseteq S$ such that for all $m\in \mathbb Z$ with $|m|\leq M$, the translate $S'-m$ is $(d,\varepsilon)$-Bohr recurrent.
	\end{enumerate}

\end{lemma}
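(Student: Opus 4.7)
The plan is to prove both parts using compactness of $\mathbb T^d$, with part (ii) reducing to part (i) applied to the translates $S-m$.

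For part (i), I would first translate the recurrence conditions into the language of Observation \ref{obs:Easy}(i). For each $s \in S$, define the open set
\[
U_s := \{\bm\alpha \in \mathbb T^d : \|s\bm\alpha\| < \varepsilon\}.
\]
The key observation is that the hypothesis of $d$-Bohr recurrence forces $\{U_s\}_{s \in S}$ to be an open cover of $\mathbb T^d$: given any $\bm\alpha \in \mathbb T^d$, the set $\Bohr_0(\bm\alpha,\varepsilon)$ is a Bohr neighborhood of $0$ of rank $d$, so by hypothesis it meets $S$ at some $s$, and this $s$ satisfies $\bm\alpha \in U_s$. Then compactness of $\mathbb T^d$ yields a finite subcover $\{U_s\}_{s \in S'}$, and unwinding the definition of $U_s$ via Observation \ref{obs:Easy}(i) shows that this finite $S' \subseteq S$ is $(d,\varepsilon)$-Bohr recurrent.

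For part (ii), I would deduce the statement from part (i) by applying it to each translate. The equivalent formulation of $d$-Bohr denseness stated in Definition \ref{def:BohrDenseness} says that $S-m$ is $d$-Bohr recurrent for every $m \in \mathbb Z$. Fix $M$ and $\varepsilon$; for each integer $m$ with $|m| \leq M$, part (i) gives a finite set $T_m \subseteq S - m$ which is $(d,\varepsilon)$-Bohr recurrent. Setting
\[
S' := \bigcup_{|m| \leq M} (T_m + m) \subseteq S,
\]
I get a finite set, and for each such $m$ the containment $S' - m \supseteq T_m$ combined with the obvious fact that supersets of $(d,\varepsilon)$-Bohr recurrent sets remain $(d,\varepsilon)$-Bohr recurrent yields the conclusion.

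There is no real obstacle; the argument is a standard extraction of a finite subcover from an open cover of a compact space. The only small care needed is to verify that reducing to $\bm\alpha \in \mathbb T^d$ (rather than handling ranks $d' \leq d$ separately) loses nothing, which is immediate since a basic Bohr neighborhood of rank $d' < d$ can be regarded as one of rank $d$ by padding $\bm\alpha$ with zero coordinates without changing $\|n\bm\alpha\|$.
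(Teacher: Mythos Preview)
Your proof is correct and rests on the same idea as the paper's: compactness of $\mathbb T^d$ for part (i), and then applying part (i) to the finitely many translates $S-m$ for part (ii). The only difference is in packaging: the paper argues part (i) by contrapositive, choosing for each finite initial segment of $S$ a ``bad'' $\bm\alpha_n$ and extracting a convergent subsequence (sequential compactness), whereas you invoke the finite-subcover formulation directly via the open sets $U_s=\{\bm\alpha:\|s\bm\alpha\|<\varepsilon\}$. Your version is arguably cleaner, but the two are logically interchangeable and neither buys anything the other does not.
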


\begin{proof}
	We prove Part (i) by proving its contrapositive: assuming $\varepsilon>0$ and that for every finite $S'\subseteq S$ there is an $\bm{\alpha}\in \mathbb T^d$ with $\|s\bm{\alpha}\|\geq \varepsilon$ for all $s\in S'$, we will find an $\bm{\alpha}\in \mathbb T^d$ such that $\|s\bm{\alpha}\|\geq \varepsilon$ for all $s\in S$.  Enumerate $S$ as $(s_j)_{j\in \mathbb N}$, and for each $n$ choose $\bm{\alpha}_n\in \mathbb T^d$ such that $\|s_j\bm{\alpha}_n\|>\varepsilon$ for all $j\leq n$; this is possible due to our hypothesis on finite subsets of $S$.  Choose a convergent subsequence $(\bm{\alpha}_{n_k})_{k\in \mathbb N}$ and call the limit $\bm{\alpha}$.  Now for all $s\in S$, we have $\|s\bm{\alpha}_{n_k}\|\to \|s\bm{\alpha}\|$, and our choice of $\bm{\alpha}_n$ means that $\|s\bm{\alpha}_{n_k}\|>\varepsilon$ for all but finitely many $k$.  Thus $\|s\bm{\alpha}\|\geq \varepsilon$ for all $s\in S$.
	
	Part (ii) follows from Part (i), Observation \ref{obs:Easy}, and the definition of ``$d$-Bohr dense''.
\end{proof}

The next lemma is essentially Lemma 5.11 of \cite{BadeaGrivauxMatheronRigidityKazhdan}.  We use it to derive Corollary \ref{cor:Strong} from Theorem \ref{thm:Strong}.

\begin{lemma}\label{lem:BohrSelective}
	Let $(S_n)_{n\in \mathbb N}$ be a sequence of Bohr dense subsets of $\mathbb Z$.  Then there is a sequence of finite sets $R_n\subseteq S_n$ such that $\bigcup_{n\in \mathbb N} R_n$ is Bohr dense.
\end{lemma}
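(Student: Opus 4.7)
The plan is to use a diagonal construction: I would exploit the compactness provided by Lemma \ref{lem:BohrIntersectiveCompactness}(ii) to extract, from each $S_n$, a finite piece $R_n$ that handles more and more of the Bohr-recurrence obligations as $n$ grows, and then verify that the union inherits full Bohr density via Observation \ref{obs:Easy}(ii).

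More concretely, I would fix sequences $d_n,M_n\to\infty$ and $\varepsilon_n\to 0$ (for instance $d_n=M_n=n$ and $\varepsilon_n=1/n$). Since each $S_n$ is Bohr dense, it is in particular $d_n$-Bohr dense, so Lemma \ref{lem:BohrIntersectiveCompactness}(ii) applied with parameters $d_n$, $M_n$, $\varepsilon_n$ produces a finite set $R_n\subseteq S_n$ such that for every $m\in\mathbb Z$ with $|m|\leq M_n$, the translate $R_n-m$ is $(d_n,\varepsilon_n)$-Bohr recurrent. Set $R:=\bigcup_{n\in\mathbb N}R_n$.

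To verify $R$ is Bohr dense, by Observation \ref{obs:Easy}(ii) it suffices to check that for every fixed $d\in\mathbb N$, $\varepsilon>0$, and $m\in\mathbb Z$, the translate $R-m$ is $(d,\varepsilon)$-Bohr recurrent. Choose $n$ large enough that $d_n\geq d$, $\varepsilon_n\leq\varepsilon$, and $M_n\geq|m|$; then by construction $R_n-m$ is $(d_n,\varepsilon_n)$-Bohr recurrent, and this trivially upgrades to $(d,\varepsilon)$-Bohr recurrence of $R_n-m$, which is a subset of $R-m$. Thus $R-m$ is $(d,\varepsilon)$-Bohr recurrent, completing the argument.

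There is no real obstacle here once Lemma \ref{lem:BohrIntersectiveCompactness}(ii) is available; the only thing to take care with is organizing the parameters so that every $(d,\varepsilon,m)$ eventually gets caught by the tail of the sequence $(d_n,\varepsilon_n,M_n)$, which is handled by letting $d_n,M_n\to\infty$ and $\varepsilon_n\to 0$.
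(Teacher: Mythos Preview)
Your proposal is correct and is essentially the same argument as the paper's: the paper also applies Lemma~\ref{lem:BohrIntersectiveCompactness}(ii) with the parameters $d_n=M_n=n$, $\varepsilon_n=1/n$ to extract $R_n$, and then invokes Observation~\ref{obs:Easy} to conclude that $\bigcup_n R_n$ is Bohr dense. Your write-up merely spells out the final verification in more detail.
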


\begin{proof}
	The Bohr denseness of $S_n$ and Lemma \ref{lem:BohrIntersectiveCompactness} allow us to choose, for each $n$, a finite subset $R_n\subseteq  S_n$ such that $R_n-m$ is $(n,1/n)$-Bohr recurrent for each $m$ with $|m|<n$.  Observation \ref{obs:Easy} then implies that $\bigcup_{n\in \mathbb N} R_n$ is Bohr dense.
\end{proof}

\section{Bohr-Hamming balls; proof of Theorem \ref{thm:Strong} and Corollary \ref{cor:Strong}}\label{sec:BHsubsec}

\subsection{Bohr-Hamming Balls} For $\varepsilon>0$, $d\in \mathbb N$, and $\mb x=(x_1,\dots,x_d)\in \mathbb T^d$, let
\[
w_\varepsilon(\mb x):= |\{j: \|x_j\|\geq \varepsilon\}|.
\]
So $w_\varepsilon(\mb x)$ is the number of coordinates of $\mb x$ differing from $0$ by at least $\varepsilon$.  Following \cite{Katznelson}, we call $\bm\alpha\in \mathbb T^d$ a \emph{generator} if $\{n\bm\alpha: n\in\mathbb Z\}$ is dense in $\mathbb T^d$.
\begin{definition}\label{def:BH}
  Let $k< d\in \mathbb N$, $\varepsilon>0$, and $\bm\alpha\in \mathbb T^d$.  The \emph{Bohr-Hamming ball with rank $d$ and radius $(k,\varepsilon)$ around $0$} determined by $\bm\alpha$ is
  \[
  BH(\bm\alpha;k,\varepsilon):=\{n\in \mathbb Z: w_\varepsilon(n\bm\alpha)\leq k\}.
  \]
So $n\in BH(\bm\alpha;k,\varepsilon)$ if at most $k$ coordinates of $n\bm\alpha$ differ from $0$ by at least $\varepsilon$.  If $\bm\alpha$ is a generator, we say that $BH(\bm\alpha;k,\varepsilon)$ is \emph{proper}.
\end{definition}

The next lemma, implicit in Section 2 of \cite{Katznelson},  asserts a useful relation between Bohr-Hamming balls and Bohr neighborhoods.
\begin{lemma}\label{lem:BHiskBohrDense} Let $k<d\in \mathbb N$ and $\varepsilon>0$.  If $BH$ is a proper Bohr-Hamming ball with rank $d$ and radius $(k,\varepsilon)$ and $B$ is a nonempty Bohr neighborhood with rank $k$, then $BH\cap B$ contains a nonempty Bohr neighborhood with rank $d$.

   Consequently, if $S\subseteq \mathbb Z$ is $d$-Bohr dense, then $S\cap BH$ is $k$-Bohr dense.
\end{lemma}
We prove Lemma \ref{lem:BHiskBohrDense} in \S\ref{sec:Independence}, following closely the proof of Lemmas 2.2 and 2.3 of \cite{Katznelson}.

The proof of Theorem \ref{thm:Strong} is mostly contained in the following lemma, which is an easy consequence of the subsequent proposition.

\begin{lemma}\label{lem:2Pieces}
Let $\delta>0$ and $k\in \mathbb N$.  If $S\subseteq \mathbb Z$ is finite and $\delta$-nonrecurrent, then there is an $\eta>0$ and a proper Bohr-Hamming ball $BH$ of radius $(k,\eta)$ such that $S+BH$ is $\delta$-nonrecurrent.
\end{lemma}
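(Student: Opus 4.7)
The plan is to assemble the three ingredients advertised in the outline of \S\ref{sec:Outline}: a density-interval witnessing of $\delta$-nonrecurrence for finite sets, the existence result (Proposition \ref{prop:RohlinTd}) for paired Rohlin towers whose level-containment set contains a proper Bohr-Hamming ball of prescribed rank and radius, and the extension principle (Lemma \ref{lem:RohlinExtend}). Under this plan, Lemma \ref{lem:2Pieces} is essentially the clean assembly of these pieces, and all genuine technical work is deferred to Proposition \ref{prop:RohlinTd}.

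First, since $S$ is finite and $\delta$-nonrecurrent, I would apply the density-interval lemma (Lemma \ref{lem:DensityIntervals}, per the outline) to extract an integer $N$ and a finite set $A\subseteq[N]$ with $|A|>\delta N$, $A+S\subseteq[N]$, and $A\cap(A+S)=\varnothing$. Fix any $\delta'$ with $\delta < \delta' < |A|/N$; we want the forthcoming towers to satisfy $|A|\mu(E_0)>\delta$, which will hold provided $\mu(E_0)>\delta'/|A|$, a constraint Proposition \ref{prop:RohlinTd} is designed to accommodate.

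Second, I would invoke Proposition \ref{prop:RohlinTd} with input data $N$, $A$, the target rank $k$, and the density threshold $\delta$, obtaining an integer $d>k$, an $\bm\alpha\in\mathbb T^d$ giving a minimal rotation $(\mathbb T^d,\mu,R)$, a radius $\eta>0$, and a pair of Rohlin towers $\mathcal T_0=\{E_0,RE_0,\dots,R^{N-1}E_0\}$, $\mathcal T_1=\{E_1,RE_1,\dots,R^{N-1}E_1\}$ with $E_0\subseteq E_1$ and $|A|\mu(E_0)>\delta$, and such that the containment set
\[
S':=\{n\in\mathbb Z:R^n E_0\subseteq E_1\}
\]
contains a proper Bohr-Hamming ball $BH:=BH(\bm\alpha;k,\eta)$ of rank $d$ and radius $(k,\eta)$. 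The conditions $|A|\mu(E_0)>\delta$, $A+S\subseteq[N]$, and $A\cap(A+S)=\varnothing$ together say that $\mathcal T_0$ witnesses the $\delta$-nonrecurrence of $S$, which is precisely what is needed to feed into the extension lemma.

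Finally, Lemma \ref{lem:RohlinExtend} applied to $S$, $\mathcal T_0$, $\mathcal T_1$, and $(\mathbb T^d,\mu,R)$ yields that $S\cup(S+S')$ is $\delta$-nonrecurrent. Since $0\in BH$ (as $w_\eta(0\cdot\bm\alpha)=0\leq k$), we have $S+BH\subseteq S+S'\subseteq S\cup(S+S')$, and since subsets of $\delta$-nonrecurrent sets are $\delta$-nonrecurrent, $S+BH$ is $\delta$-nonrecurrent, completing the proof. The only real obstacle in the above plan is the simultaneity demanded of Proposition \ref{prop:RohlinTd}—one must produce a \emph{single} minimal torus rotation carrying a pair of tall Rohlin towers whose base measures meet the density threshold $|A|\mu(E_0)>\delta$ \emph{and} whose level-containment set $S'$ contains a proper Bohr-Hamming ball of the requested rank $d$ and radius $(k,\eta)$. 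That combinatorial-geometric construction is the substantive content of the paper and is handled in \S\S\ref{sec:InFp}--\ref{sec:RohlinTowers}; here it is invoked as a black box.
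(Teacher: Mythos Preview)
Your approach is essentially identical to the paper's: apply Lemma \ref{lem:DensityIntervals} to get $A\subseteq[N]$ witnessing the $\delta$-nonrecurrence of $S$, invoke Proposition \ref{prop:RohlinTd} to obtain the paired towers and the proper Bohr--Hamming ball, and then apply Lemma \ref{lem:RohlinExtend}. One technical point needs correction: Proposition \ref{prop:RohlinTd} as stated takes a \emph{prime} $p$ and an $\varepsilon>0$ as inputs (not $N$, $A$, $\delta$) and outputs towers of height $p$ with $\mu(E)>\tfrac{1-\varepsilon}{p}$; you must therefore choose your $N$ from Lemma \ref{lem:DensityIntervals} to be prime (which is possible since that lemma works for all sufficiently large $N$) and choose $\varepsilon$ so that $|A|\tfrac{1-\varepsilon}{p}>\delta$, exactly as the paper does.
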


\begin{proposition}\label{prop:RohlinTd}
For every $k\in \mathbb N$, $\varepsilon>0$, and prime $p$, there exist $d\in \mathbb N$, a minimal torus rotation $(\mathbb T^d,\mu,R)$ by $\bm\alpha\in \mathbb T^d$, $\eta>0$, a proper Bohr-Hamming ball $BH=BH(\bm\alpha;k,\eta)$ with rank $d$, and Rohlin towers
\begin{equation*}
\mathcal T = \{R^nE : 0\leq n \leq p-1\}, \qquad \mathcal T' = \{R^nE' : 0\leq n \leq p-1\}
\end{equation*}
such that $\mu(E)>\frac{1-\varepsilon}{p}$, $E\subseteq E'$, and $R^nE\subseteq E'$ for all $n\in BH$.
\end{proposition}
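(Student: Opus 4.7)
The plan is to reduce the construction to a combinatorial problem on the finite group $\mathbb F_p^d$ and then lift the result to the torus via a factor map. I would begin by choosing $d$ large (depending on $k,\varepsilon,p$) and small rationally independent $\gamma_1,\dots,\gamma_d\in\mathbb R$, setting $\alpha_j := \tfrac{1}{p}+\gamma_j$. Then $\bm\alpha=(\alpha_1,\dots,\alpha_d)$ is independent in $\mathbb T^d$, so $R\mb x = \mb x + \bm\alpha$ is a minimal torus rotation and $BH:=BH(\bm\alpha;k,\eta)$ is proper for any $\eta>0$. For $\eta<1/p$ and $\gamma_j$ small enough, if $n\notin p\mathbb Z$ then $\|n\alpha_j\|$ is close to $\|n/p\|\ge 1/p>\eta$ for every $j$, so $w_\eta(n\bm\alpha)=d>k$ and $n\notin BH$; thus $BH\subseteq p\mathbb Z$. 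For $n\in p\mathbb Z$ one has $n\alpha_j\equiv n\gamma_j\pmod 1$, so $n\in BH$ iff $\|n\gamma_j\|\ge\eta$ for at most $k$ indices $j$ (the \emph{bad} coordinates of $n$).

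Next I would reduce to a finite problem in $\mathbb F_p^d$: find $F\subseteq F'\subseteq \mathbb F_p^d$ with $|F|/p^d>(1-\varepsilon)/p$ such that the $p$ translates $F+a\mathbf 1$ (resp.\ $F'+a\mathbf 1$) for $a=0,1,\dots,p-1$ are pairwise disjoint, and $F+v\subseteq F'$ for every $v\in\mathbb F_p^d$ of Hamming weight $w(v)\le k$. Writing $N_c(\mb a):=|\{j:a_j=c\}|$, I would use the robust plurality construction
\[
F=\{\mb a : N_0(\mb a)\ge N_c(\mb a)+2k+1 \text{ for all } c\ne 0\},\quad F'=\{\mb a : N_0(\mb a)>N_c(\mb a) \text{ for all } c\ne 0\}.
\]
By symmetry among the $p$ label values and a multinomial central limit estimate, $|F|/p^d\to 1/p$ as $d\to\infty$, while $|F'|/p^d\le 1/p$ follows automatically from the pairwise disjointness of the $p$ translates. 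The Rohlin disjointness is immediate because $F+a\mathbf 1$ is exactly the set with plurality label $a$. Since flipping any $\le k$ coordinates changes each $N_c$ by at most $k$, the margin $2k+1$ in $F$ drops to at least $1$ after translating by any $v$ with $w(v)\le k$, giving $F+v\subseteq F'$.

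The lift uses the factor map $\pi:\mathbb T^d\to\mathbb F_p^d$ defined by $\pi(\mb x)_j:=\lfloor px_j+\tfrac12\rfloor\bmod p$. For $|\gamma_j|$ sufficiently small, $\pi\circ R=T_{\mathbf 1}\circ\pi$ off the preimage of the label boundaries $(c+\tfrac12)/p$, where $T_{\mathbf 1}$ is translation by $\mathbf 1$ on $\mathbb F_p^d$. I would set $E':=\pi^{-1}(F')$ and $E:=\pi^{-1}(F)\cap B$, where $B\subseteq\mathbb T^d$ is the buffer set on which every coordinate lies at distance $>\eta$ from every label boundary; this buffer has measure $1-O(p\eta d)$. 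It guarantees that, for $\mb x\in E$, good coordinates cannot cross label boundaries under any shift of magnitude $<\eta$, in particular under the shifts $a\gamma_j$ (for $1\le a\le p-1$, relevant for Rohlin) and $n\gamma_j$ (for $n\in BH$ and good $j$, relevant for the return set). With this setup, the Rohlin properties of the two towers descend from those of $F,F'$ via the almost-everywhere equivariance of $\pi$; the containment $E\subseteq E'$ follows from $F\subseteq F'$; and for $n\in BH$ the vector $\pi(R^n\mb x)-\pi(\mb x)$ is supported on the $\le k$ bad coordinates of $n$, so has Hamming weight $\le k$, yielding $\pi(R^n\mb x)\in F+v\subseteq F'$ and hence $R^n\mb x\in E'$.

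The main obstacle will be the density estimate $|F|/p^d\to 1/p$, which requires the multinomial/exchangeability CLT argument, together with the bookkeeping needed to make the parameters $d$, $(\gamma_j)$, and $\eta$ mutually compatible: $\gamma_j$ small enough that $\pi$ is equivariant under all relevant iterates of $R$, $\eta$ small enough that $B$ retains most of the mass of $\pi^{-1}(F)$, and $d$ large enough that the plurality-with-margin event has probability $>(1-\varepsilon)/p$.
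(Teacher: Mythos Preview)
Your overall plan---reduce to a Rohlin-type statement in $\mathbb F_p^d$ and then lift to $\mathbb T^d$ by perturbing the rational rotation $(1/p,\dots,1/p)$---is exactly the paper's strategy, and your ``robust plurality'' sets $F\subseteq F'$ are a perfectly good (indeed simpler) substitute for the paper's \emph{bias cells} $E_0(k+1,d)\subseteq E_0(1,d)$.  The density estimate $|F|/p^d\to 1/p$ and the inclusion $F+H_k\subseteq F'$ are correct.

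There is, however, a genuine gap in the lift.  Your claim that $\pi\circ R = T_{\mathbf 1}\circ\pi$ ``off the preimage of the label boundaries'' is false once $\gamma_j\neq 0$: writing $y=px_j+\tfrac12$, one has $\lfloor y+1+p\gamma_j\rfloor=\lfloor y\rfloor+1$ only when $\{y\}\notin[1-p\gamma_j,1)$ (for $\gamma_j>0$), a set of \emph{positive} measure $p|\gamma_j|$ in each coordinate.  So $\pi$ is not almost-everywhere equivariant, and the Rohlin disjointness of the $E'$ tower does \emph{not} descend from that of $F'$.  Concretely, with $p=2$, $d=3$ and small $\gamma_j>0$, the point $\mb x=(3/4+\gamma_1/2,\,3/4+\gamma_2/2,\,0)$ has all three coordinates labelled $0$ (so $\mb x\in E'$), while $\mb x-\bm\alpha=(1/4-\gamma_1/2,\,1/4-\gamma_2/2,\,1/2-\gamma_3)$ has labels $(0,0,1)$ (so $\mb x-\bm\alpha\in E'$); thus $E'\cap RE'$ contains an open set.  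Buffering $E$ alone does not help here, and buffering $E'$ uniformly would destroy the containment $R^nE\subseteq E'$ at the bad coordinates of $n\in BH$.  The paper avoids this by defining $E'$ geometrically as the \emph{closure} $\overline{E+U}$, where $U=\Hamm(k,\eta)$ is the approximate Hamming ball in $\mathbb T^d$; one then shows $\overline{E+U}\subseteq (F')^{\ssquare}_0$ (your $\pi^{-1}(F')$), so that the compact translates $E'+a\cdot(1/p,\dots,1/p)$ are genuinely disjoint, and disjointness survives perturbation to an independent $\bm\alpha$.  Some analogous ``fattening by $U$, then take closure'' step is needed in your argument.

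A smaller point: the assertion $BH\subseteq p\mathbb Z$ is false---since $\bm\alpha$ is independent, $\{n\bm\alpha\}$ is dense in $\mathbb T^d$, so there are $n\notin p\mathbb Z$ with $n\bm\alpha$ arbitrarily close to $0$.  Fortunately you do not need it: for \emph{any} $n\in BH$ the good coordinates satisfy $\|n\alpha_j\|<\eta$, so your buffer argument shows $\pi(R^n\mb x)$ agrees with $\pi(\mb x)$ on all good coordinates, and $R^nE\subseteq \pi^{-1}(F')$ follows exactly as you wrote.
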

The proof of Proposition \ref{prop:RohlinTd} occupies \S\S\ref{sec:InFp}-\ref{sec:RohlinTowers}.  

 We need one more standard lemma for the proof of Lemma \ref{lem:2Pieces}.
\begin{lemma}\label{lem:WitnessesExist}
If $S\subseteq \mathbb Z$ is a finite $\delta$-nonrecurrent set, then for all sufficiently large $N$, there exists $A\subseteq [N]$ with $|A|>\delta N$, $A+S\subseteq [N]$, and $A\cap (A+S)=\varnothing$.
\end{lemma}
The condition $A\cap (A+S)=\varnothing$ is equivalent to $(A-A)\cap S=\varnothing$; we will use this from time to time without comment.

\begin{proof}
  Assume $S$ is finite and $\delta$-nonrecurrent and  let $M=\max\{|s|+1:s\in S\}$. We choose, by Part (iii) of Lemma \ref{lem:nonrecurrenceForms}, a $\delta'>\delta$ such that for all $N\in \mathbb N$, there is an $A_N\subseteq [N]$ with $|A_N|\geq \delta'N$ and $A_N\cap (A_N+S)=\varnothing$.  Choose $N$ large enough that $\delta'N-2M> \delta N$.  Letting $A = A_N\cap [M-1,N-M-1]$, we have $|A|\geq |A_N|-2M>\delta'N-2M>\delta N$, $A+S\subseteq [N]$, and $A\cap (A+S)=\varnothing$.
\end{proof}

\begin{proof}[Proof of Lemma \ref{lem:2Pieces}] To prove Lemma \ref{lem:2Pieces}, we will apply Lemma \ref{lem:RohlinExtend} to the Rohlin towers provided by Proposition \ref{prop:RohlinTd}. Let $k\in \mathbb N$ and assume $S\subseteq \mathbb Z$ is finite and $\delta$-nonrecurrent. We will find a minimal torus rotation $(\mathbb T^d,\mu,R)$ by an $\bm\alpha\in \mathbb T^d$, a measurable set $D\subseteq \mathbb T^d$ having $\mu(D)>\delta$, and a Bohr-Hamming ball $BH=BH(\bm\alpha;k,\eta)$ such that
\begin{equation}\label{eqn:DRnD}
	D\cap R^{n}D = \varnothing \quad \text{for all } n\in S+BH.
\end{equation}
To construct $D$, we first apply Lemma \ref{lem:WitnessesExist} to find a  prime $p$ and $A\subseteq [p]$ having $|A|>\delta p$ such that $A\cap (A+S)=\varnothing$ and $A+S\subseteq [p]$; this is possible due to our assumptions on $S$.  Fix $\varepsilon>0$ so that $|A|\frac{(1-\varepsilon)}{p}>\delta$ and invoke Proposition \ref{prop:RohlinTd} with this $\varepsilon$.  We form $D$  by copying $A$ into levels of the tower $\mathcal T$ provided by Proposition \ref{prop:RohlinTd}:
\[D:=\bigcup_{a\in A} R^a E.\] By our choice of $\varepsilon$ and the mutual disjointness of the levels of $\mathcal T$, we have
\[
\mu(D) = |A|\mu(E) > \frac{|A|(1-\varepsilon)}{p} > \delta.
\] To prove that $D$ satisfies (\ref{eqn:DRnD}), observe that the hypotheses of Lemma \ref{lem:RohlinExtend} hold with $p$ in place of $N$.  Proposition \ref{prop:RohlinTd} states that $BH\subseteq \{n:R^nE\subseteq E'\}$,  so we may   cite Lemma \ref{lem:RohlinExtend} with $BH$ in place of $S'$ and conclude that (\ref{eqn:DRnD}) holds.  \end{proof}

\subsection{Proof of  Theorem \ref{thm:Strong} and Corollary \ref{cor:Strong}}

Recall the statement of Theorem \ref{thm:Strong}: if $S\subseteq \mathbb Z$ is Bohr dense and $\delta<\frac{1}{2}$, then there is Bohr dense $\delta$-nonrecurrent subset $S'\subseteq S$.

\begin{proof}[Proof of Theorem \ref{thm:Strong}]
  Let $S\subseteq \mathbb Z$ be  Bohr dense and let $\delta<\frac{1}{2}$.    By Lemma \ref{lem:DensityCompactness} it suffices to find $\delta'>\delta$ and a Bohr dense set $S'\subseteq S$ such that every finite subset $S''\subseteq S'$ is $\delta'$-nonrecurrent.  Fixing $\delta'$ with $\delta < \delta' <\frac{1}{2}$, we will construct an increasing sequence $S_1 \subseteq S_2 \subseteq \dots$ of subsets of $S$ such that each $S_k$ is $\delta'$-nonrecurrent and satisfies the following condition:
  \begin{equation}\label{eqn:SkTranslates}
  \text{for all } m\in \mathbb Z \text{ with } |m|\leq k, \text{ the translate } S_k-m \text{ is } (k,1/k)\text{-Bohr recurrent}.
  \end{equation}
  To construct $S_1$, we find an odd integer $s_1\in S$, and let $S_1=\{s_1\}$.  Such $s_1$ exists, as the odd integers form a Bohr neighborhood (Example \ref{ex:Odd}) and $S$ is Bohr dense.  Now $S_1$ is $\delta'$-nonrecurrent, as the set of odd numbers is $\delta'$-nonrecurrent for every $\delta'<\frac{1}{2}$.

For the inductive step of the construction, we assume $S_{k-1}$ is a finite $\delta'$-nonrecurrent subset of $S$. We apply Lemma \ref{lem:2Pieces} to find a proper Bohr-Hamming ball $BH$ with radius $(k,\eta)$  such that $S_{k-1}+BH$ is $\delta'$-nonrecurrent.  Lemma \ref{lem:BHiskBohrDense} implies $S\cap (S_{k-1}+BH)$ is $k$-Bohr dense, and Lemma \ref{lem:BohrIntersectiveCompactness} provides a finite subset $S_k$ of $S\cap (S_{k-1}+BH)$ satisfying (\ref{eqn:SkTranslates}). Since $0\in BH$ we have $S_{k-1}\subseteq S_{k-1}+BH$. The finiteness of $S_{k-1}$ and the latter containment means we can choose $S_k$ to satisfy $S_{k-1}\subseteq  S_k$ as well.

  Letting $S':=\bigcup_{k\in \mathbb N} S_k$, we have that every finite subset of $S'$ is contained in one of the sets $S_k$, and each $S_k$ is $\delta'$-nonrecurrent, so Lemma \ref{lem:DensityCompactness} implies $S'$ is $\delta$-nonrecurrent.  The Bohr denseness of $S'$ follows from (\ref{eqn:SkTranslates}) and Observation \ref{obs:Easy}.
  \end{proof}

The next lemma records two elementary facts for the proof of Corollary \ref{cor:Strong}.  \begin{lemma}\label{lem:Elementary} Let $R, S\subseteq \mathbb Z$.
\begin{enumerate}
\item[(i)] If neither $R$ nor $S$ is a set of measurable recurrence then $R\cup S$ is not a set of measurable recurrence.

\item[(ii)] If $S\subseteq \mathbb Z$ is finite then $S\setminus \{0\}$ is not a set of measurable recurrence.
\end{enumerate}
\end{lemma}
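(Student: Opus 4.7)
For part (i), the natural approach is to combine witnesses using a product system. By the definition of set of measurable recurrence (equivalently, by Lemma \ref{lem:equiv}), there exist measure preserving systems $(X,\mu,T)$ and $(Y,\nu,U)$ and sets $D\subseteq X$, $E\subseteq Y$ with $\mu(D),\nu(E)>0$ such that $D\cap T^nD=\varnothing$ for every $n\in R$ and $E\cap U^nE=\varnothing$ for every $n\in S$. I would then consider the product system $(X\times Y,\mu\times\nu,T\times U)$ together with the set $D\times E$, whose measure equals $\mu(D)\nu(E)>0$. Using the identity
\[
(D\times E)\cap (T\times U)^n(D\times E) = (D\cap T^nD)\times (E\cap U^nE),
\]
the right-hand side is empty whenever $n\in R$ (because the first factor is empty) and also whenever $n\in S$ (because the second factor is empty), so $R\cup S$ is not a set of measurable recurrence.

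For part (ii), it suffices to exhibit a single measure preserving system witnessing the non-recurrence of $S\setminus\{0\}$. Since $S$ is finite, I would choose a prime $p$ strictly greater than $\max\{|n|:n\in S\setminus\{0\}\}$ and take the cyclic rotation on $\mathbb Z/p\mathbb Z$ with normalized counting measure $\mu$ and transformation $T(x)=x+1$. Setting $D=\{0\}$ gives $\mu(D)=1/p>0$, and for any $n\in S\setminus\{0\}$ the choice of $p$ ensures $n\not\equiv 0\pmod p$, so $T^nD=\{n \bmod p\}$ is disjoint from $D$. Thus $D\cap T^nD=\varnothing$ for every $n\in S\setminus\{0\}$, establishing that $S\setminus\{0\}$ is not a set of measurable recurrence.

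Neither step poses a genuine obstacle; the only mildly delicate point is recognizing that products of witnesses for non-recurrence give a witness for the union, which is the standard way to see that the family of sets of measurable recurrence is closed under finite unions in the dual sense (equivalently, that its complementary family of $\delta$-nonrecurrent-type sets is closed under finite unions after passing through the equivalence in Lemma \ref{lem:equiv}).
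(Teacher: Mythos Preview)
Your proof is correct and follows essentially the same approach as the paper: part (i) via the product of witnessing systems, and part (ii) via a cyclic rotation on $\mathbb Z/N\mathbb Z$ with $D=\{0\}$. The only cosmetic difference is that you take $N$ to be a prime larger than $\max\{|n|:n\in S\setminus\{0\}\}$, whereas the paper simply sets $N=1+\max\{|s|:s\in S\}$; primality is unnecessary here, but harmless.
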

Part (i) is proved by taking the cartesian product of measure preserving systems witnessing the nonrecurrence of $R$ and $S$. Part (ii) follows from considering a group rotation on $\mathbb Z/N\mathbb Z$, where $N=1+\max\{|s|:s\in S\}$.

We now prove Corollary \ref{cor:Strong}, which says that if $S\subseteq \mathbb Z$ is Bohr dense then there is a Bohr dense set $S'\subseteq S$ 	such that for all $m\in \mathbb Z$, the set $(S'-m)\setminus \{0\}$ is not a set of measurable recurrence.

\begin{proof}[Proof of Corollary \ref{cor:Strong}]
	Let $S\subseteq \mathbb Z$ be Bohr dense.  We begin by constructing a decreasing sequence  $S_0\supseteq S_1\supseteq S_2\supseteq \cdots $ of Bohr dense subsets of $S$  such that for each $n$,
\begin{equation}\label{eqn:Neither}
\text{neither } S_n-n \text{ nor } S_n+n \text{ is a set of measurable recurrence.}
\end{equation}  We begin with $n=0$ and apply Theorem \ref{thm:Strong} to find a Bohr dense subset $S_0\subseteq S$ which is not a set of measurable recurrence.  Supposing $S_{n-1}$ is defined and is Bohr dense, then each of its translates is Bohr dense as well, and we may apply Theorem \ref{thm:Strong} to $S_{n-1}-n$ to find a Bohr dense subset $S_{n,0}\subseteq S_{n-1}$ such that  $S_{n,0}-n$ is not a set of measurable recurrence. Repeating this process with $S_{n,0}+n$ in place of $S_{n-1}-n$ produces a Bohr dense set $S_n\subseteq S_{n-1}$ satisfying (\ref{eqn:Neither}). Having constructed $S_n$, Lemma \ref{lem:BohrSelective} provides finite sets $R_n\subseteq S_n$ such that $S':=\bigcup_{n\in \mathbb N} R_n$ is Bohr dense.

To complete the proof we fix $m\in \mathbb Z$ and will show that $m\in \mathbb Z$, $(S'-m)\setminus \{0\}$ is not a set of measurable recurrence.  Observe that $(S'-m)\setminus (S_m-m)$ is finite, as all but finitely many of the $R_n$ are contained in $S_m$.  Thus $S'-m$ can be written as $E\cup (S_m-m)$, where $E$ is finite. Since $S_m-m$ is not a set of measurable recurrence,  Lemma \ref{lem:Elementary} implies that $(S'-m)\setminus\{0\}$ is also not a set of measurable recurrence.
\end{proof}

\section{Rohlin towers in \texorpdfstring{$(\mathbb Z/p\mathbb Z)^d$}{Z/pZd}}\label{sec:InFp}

In \S\ref{sec:RohlinTowers} we prove Proposition \ref{prop:RohlinTd} by constructing certain Rohlin towers for minimal torus rotations.  In this section we prove Lemma \ref{lem:GpTile}, establishing much of the structure of the towers while working in $(\mathbb Z/p\mathbb Z)^d$, where $p$ is a fixed prime.  Section \ref{sec:Copying} explains the routine process of copying this structure into $\mathbb T^d$.

\subsection{Hamming balls in \texorpdfstring{$\mathbb Z/N\mathbb Z$}{Z/NZ}}\label{sec:HammField}

For $N, d\in \mathbb N$ we let $G_N^d$ denote the group $(\mathbb Z/N\mathbb Z)^d$.  We write elements of $G_N^d$ as $\mb x = (x_1,\dots, x_N)$, where $x_j\in \mathbb Z/N\mathbb Z$.  In general we write $\mathbf 0 := (0,\dots,0)$ and $\mathbf 1 := (1,\dots, 1)\in G_N^d$.  If $n\in \mathbb Z$ we write $n\mathbf 1$ for $(n , \dots, n)$.  For $\mb x\in G_N^d$, define
\[w(\mb x):=|\{j: x_j\neq 0 \}|,\] so that $w(\mb x)$ is the number of coordinates of $\mb x$ which are not equal to $0$.  Given $k\in \mathbb N$, let
\[
H_k:=\{\mb x\in G_N^d : w(\mb x) \leq k\}.
\]
So $H_k$ is the set elements of $G_N^d$ which are nonzero in at most $k$ coordinates, otherwise known as the \emph{Hamming ball of radius $k$ around $\mb 0$}.

\begin{lemma}\label{lem:GpTile}
	Let $p\in \mathbb N$ be prime.  For all $k\in \mathbb N$ and all $\varepsilon>0$, there exists $d\in \mathbb N$ and sets $A,$ $A_1\subseteq G_p^d$ such that $|A|>\frac{1-\varepsilon}{p}|G_p^d|$, $A\subseteq A_1$, and $A+H_k\subseteq A_1$,  while the translates
	\[A_1,\,A_1 + \bm 1, \dots,\,A_1+ (p-1)\bm 1,
\]
 are mutually disjoint.
\end{lemma}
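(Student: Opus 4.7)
The plan is to take $A_1$ to be the set of $\mb x \in G_p^d$ in which the symbol $0 \in \mathbb Z/p\mathbb Z$ is the strict plurality among the $d$ coordinates, and $A$ the subset in which $0$ beats every other symbol by a margin of at least $2k+1$. Writing $N_c(\mb x) := |\{i \le d : x_i = c\}|$ for each $c \in \mathbb Z/p\mathbb Z$, I set
\begin{align*}
A_1 &:= \{\mb x \in G_p^d : N_0(\mb x) > N_c(\mb x) \text{ for all } c \neq 0\}, \\
A   &:= \{\mb x \in G_p^d : N_0(\mb x) \geq N_c(\mb x) + 2k+1 \text{ for all } c \neq 0\},
\end{align*}
so $A \subseteq A_1$ is transparent. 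For the disjointness of the $p$ translates, the identity $N_a(\mb x - j\mb 1) = N_{a+j}(\mb x)$ shows that $\mb x \in A_1 + j\mb 1$ iff $j$ is the unique plurality of $\mb x$, and a vector admits at most one such plurality. For the absorption $A + H_k \subseteq A_1$, each nonzero coordinate of $\mb h \in H_k$ decreases exactly one count $N_c$ by $1$ and increases exactly one other by $1$, so $|N_c(\mb x + \mb h) - N_c(\mb x)| \leq w(\mb h) \leq k$ for every $c$; hence whenever $\mb x \in A$ and $c \neq 0$,
\[
  N_0(\mb x + \mb h) - N_c(\mb x + \mb h) \geq \bigl(N_0(\mb x) - N_c(\mb x)\bigr) - 2k \geq 1,
\]
placing $\mb x + \mb h$ in $A_1$.

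The only quantitative step, which I expect to be the main obstacle, is the density bound $|A| > \frac{1-\varepsilon}{p}|G_p^d|$; I would establish it by taking $d$ large (depending on $p$, $k$, $\varepsilon$). Drawing $\mb x$ uniformly from $G_p^d$, the tuple $(N_0(\mb x), \ldots, N_{p-1}(\mb x))$ is multinomial with each $N_c$ marginally $\mathrm{Bin}(d, 1/p)$. By symmetry among the $p$ symbols, the $p$ events ``$c$ is the unique plurality of $\mb x$'' have equal probability, each equal to $\bigl(1 - \Pr[\text{tie at the top}]\bigr)/p$. A union bound together with the local central limit theorem yields $\Pr[N_c = N_{c'}] = O(1/\sqrt{d})$ for every pair $c \neq c'$, so the tie probability is $O(1/\sqrt{d}) \to 0$; similarly $\Pr[|N_0 - N_c| \leq 2k] = O(1/\sqrt{d})$ for each $c \neq 0$ since the mass function of the lattice-valued sum $N_0 - N_c$ is uniformly $O(1/\sqrt{d})$ and $2k$ is a fixed constant. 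Combining these estimates gives $\Pr[\mb x \in A] \to 1/p$ as $d \to \infty$, so for sufficiently large $d$ we obtain $|A|/|G_p^d| > (1-\varepsilon)/p$, completing the construction.
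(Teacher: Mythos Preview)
Your proof is correct and takes a genuinely different, more elementary route than the paper's. The paper builds $A_1$ and $A$ out of \emph{bias cells} $\Bias(C,k,d)$ indexed by all nonempty proper subsets $C\subsetneq \mathbb Z/p\mathbb Z$, where membership is decided by whether each count $N_c$ lies above $d/p+k$ or below $d/p-k$; it then selects a system $\mathcal P_0$ of representatives for the orbits of $C\mapsto C+1$ and sets $A=E_0(k+1,d)$, $A_1=E_0(1,d)$. Primeness of $p$ is invoked precisely to guarantee that every such orbit has length $p$, so that the $p$ translates of $E_0$ partition the union of all bias cells. Your ``strict plurality'' definition sidesteps this entirely: the translates $A_1+j\mb 1$ are disjoint simply because a vector has at most one strict plurality symbol, so your argument works verbatim for composite $p$ as well. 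On the density side, the paper bounds $|G_p^d\setminus E(k,d)|$ by a direct binomial count with Stirling's formula, whereas you use symmetry to reduce to the anticoncentration estimate $\Pr[|N_c-N_{c'}|\le 2k]=O(1/\sqrt d)$ via the local CLT; both are standard, and yours is arguably cleaner. The paper's bias-cell machinery is more elaborate than the lemma requires, though it does give a finer partition of $G_p^d$ that one might want for other purposes.
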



The proof of Lemma \ref{lem:GpTile} occupies the remainder of this section.  To construct $A$ and $A_1$ we need sets which are very nearly invariant under translation by elements of $H_k$, and whose translates by $\mb 1$, $\dots$, $(p-1)\mb 1$ are mutually disjoint.  Such sets are defined in \S\ref{sec:BiasCells}, and assembled to form $A$ and $A_1$ in \S\ref{sec:Assembly}.

\subsection{Bias cells}\label{sec:BiasCells}  Fix a prime $p$ for the remainder of this section.  For $t\in \mathbb Z/p\mathbb Z$ and $\mb y = (y_1,\dots, y_d)\in G_p^d$, let
 \[
w(\mb y;t):=|\{j:y_j=t\}|,
\]
so that $w(\mb y; t)$ is the number of coordinates of $\mb y$ which are equal to $t$. We let $\mathcal P$ denote the collection of nonempty proper subsets of $\mathbb Z/p\mathbb Z$.  For each $C\in \mathcal P$ and $k, d\in \mathbb N$, let
\[
\Bias(C,k,d):=\{\mb y\in G_p^d: w(\mb y;t)> \tfrac{d}{p}+k  \text{ if } t\in C,\, w(\mb y;t) < \tfrac{d}{p}-k \text{ if } t\notin C\}.
\]
For example, with $p=3$ and $C = \{0,1\}$, $\Bias(C,5,3000)$ is the set of $\mb y \in G_3^{3000}$ such that more than $1005$ coordinates of $\mb y$ are equal to $0$, more than $1005$ coordinates of $\mb y$ are equal to $1$, and fewer than $995$ coordinates of $\mb y$ are equal to $2$.

The following lemma records some elementary properties of $\Bias(C,k,d)$.
\begin{lemma}\label{lem:Bias}
Let $C, C'\in \mathcal P.$  For all $d, k\in \mathbb N$
\begin{enumerate}

\item[(i)]  $\Bias(C,k,d)+\mb 1= \Bias(C+1,k,d)$,

\item[(ii)] if $C\neq C'$ then $\Bias(C,k,d)\cap \Bias(C',k,d)=\varnothing$.
\end{enumerate}

If $l<k$ then

\begin{enumerate}
\item[(iii)]    $\Bias(C,k,d)\subseteq \Bias(C,l,d)$,

\item[(iv)]   $\Bias(C,k,d) + H_{l}\subseteq \Bias(C, k-l,d)$.
\end{enumerate}
\end{lemma}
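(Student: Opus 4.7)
The lemma collects four routine structural facts about the sets $\Bias(C,k,d)$, and my plan is to verify each by unwinding the definitions; I do not expect any genuine obstacle here, but would be careful about the bookkeeping with strict inequalities in (ii) and with the coordinate-by-coordinate count shift in (iv).

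For part (i), my plan is to observe that translation by $\mb 1$ is a bijection of $G_p^d$ that maps the set $\{j : y_j = t\}$ onto $\{j : y_j + 1 = t+1\}$, so that $w(\mb y + \mb 1 ; t+1) = w(\mb y; t)$ for every $t \in \mathbb Z/p\mathbb Z$. Reading the defining inequalities of $\Bias(C,k,d)$ through this bijection gives exactly the defining inequalities for $\Bias(C+1,k,d)$, and since translation by $\mb 1$ is invertible the containment is an equality. For part (ii), if $C \neq C'$ I would pick $t \in C \triangle C'$; by symmetry assume $t \in C \setminus C'$. Any $\mb y$ in the intersection would have to satisfy both $w(\mb y;t) > \tfrac{d}{p} + k$ and $w(\mb y; t) < \tfrac{d}{p} - k$, which is impossible for any $k \geq 0$ (even $k=0$, since the inequalities are strict).

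For part (iii), monotonicity of the defining inequalities does the job: $\tfrac{d}{p}+k > \tfrac{d}{p} + l$ and $\tfrac{d}{p} - k < \tfrac{d}{p} - l$ when $l < k$, so each defining inequality for $\Bias(C,k,d)$ implies the corresponding one for $\Bias(C,l,d)$. For part (iv), which is the content-carrying step, I would fix $\mb y \in \Bias(C,k,d)$ and $\mb h \in H_l$ and note that $\mb h$ is nonzero in at most $l$ coordinates, so $\mb y$ and $\mb y + \mb h$ agree in all but at most $l$ coordinates. This gives the uniform bound
\[
\bigl| w(\mb y + \mb h; t) - w(\mb y; t) \bigr| \leq l \qquad \text{for every } t \in \mathbb Z/p\mathbb Z,
\]
since adjusting at most $l$ coordinates can change any single count by at most $l$. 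Therefore, for $t \in C$, $w(\mb y + \mb h; t) > \tfrac{d}{p} + k - l = \tfrac{d}{p} + (k - l)$, and for $t \notin C$, $w(\mb y + \mb h; t) < \tfrac{d}{p} - k + l = \tfrac{d}{p} - (k-l)$, which places $\mb y + \mb h$ in $\Bias(C, k-l, d)$, as required.

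The whole argument is essentially a definition chase; the only spot that warrants care is matching the strict inequalities in (ii) and (iv) against the possibility that $k$, $l$, or $d/p$ are small, but since all the defining inequalities are strict and the counts are integers, nothing degenerate arises.
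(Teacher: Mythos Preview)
Your proposal is correct and follows essentially the same approach as the paper: the same identity $w(\mb y+\mb 1;t+1)=w(\mb y;t)$ for (i), the same symmetric-difference contradiction for (ii), monotonicity for (iii), and the coordinate-perturbation bound $|w(\mb y+\mb h;t)-w(\mb y;t)|\le l$ for (iv). In fact your inequalities in (iv) are stated with the correct orientation, whereas the paper's printed proof contains a harmless swap of the two cases.
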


\begin{proof}
To prove Part (i), observe that $w(\mb x+\mb 1; t) = w(\mb x;t-1)$ for all $\mb x\in G_p^d$ and all $t\in \mathbb Z/p\mathbb Z$.  If $\mb x$ satisfies the inequalities defining $\Bias(C,k,d)$, it follows that $\mb x+\mb 1$ satisfies the  inequalities defining $\Bias(C+1,k,d)$.

To prove Part (ii) note that if $\mb y$ lies in the intersection written in (ii) and $t\in C\triangle C'$, then $w(\mb y;t)$ is both strictly greater than and strictly less than $\frac{d}{p}$.  This is impossible, so the intersection is empty.

Part (iii) follows immediately from the relevant definition.
	
To prove Part (iv), let $\mb x\in \Bias(C,k,d)$ and $\mb y\in H_{l}$, with the aim of showing $\mb x + \mb y\in \Bias(C,k-l,d)$.  Then $\mb x$ satisfies $w(\mb x;t)>\frac{d}{p}+k$ for every $t\in C$ and $w(\mb x;t)<\frac{d}{p}-k$ for every $t\notin C$, while $\mb y$ has at most $l$ nonzero entries. Thus $\mb x+\mb y$ differs from $\mb x$ in at most $l$ coordinates, so that $|w(\mb x;t)-w(\mb x+\mb y;t)|\leq l$ for each $t\in \mathbb Z/p\mathbb Z$. The conditions on $w(\mb x;t)$ then imply $w(\mb x+\mb y;t)<  \frac{d}{p} - k+l$ for each $t\in C$ and $w(\mb x+\mb y;t)> \frac{d}{p} + k- l$ for each $t\notin C$.  Thus $\mb x+\mb y\in \Bias(C,k-l,d)$.
\end{proof}

\subsection{Assembling bias cells}\label{sec:Assembly}  Note that $\mathbb Z/p\mathbb Z$ acts on $\mathcal P$ by translation; call this action $\tau$. Every $C\in \mathcal P$ belongs to a $\tau$-orbit of cardinality $p$, since every $C\in \mathcal P$ satisfies $C\neq C+1$, and the cardinality of an orbit divides the order of the acting group; this is the only place where we use the primeness of $p$.  Choose a collection of sets $\mathcal P_0$ representing each $\tau$-orbit (i.e.~every $\tau$-orbit contains exactly one element of $\mathcal P_0$), so that
\[
\{\mathcal P_0, \mathcal P_0+1, \dots, \mathcal P_0+(p-1)\}
\]
is a partition of $\mathcal P$. We fix this choice of $\mathcal P_0$ for the remainder of the section.

For example, when $p=3$, we have
\[\mathcal P = \{\{0\}, \{1\}, \{2\}, \{0,1\}, \{0,2\}, \{1,2\}\},\] and we choose $\mathcal P_0 = \{\{0\},\{0,1\}\}$.  Then $\mathcal P_0+1=\{\{1\},\{1,2\}\}$ and $\mathcal P_0+2 = \{\{2\},\{0,2\}\}.$

Lemma \ref{lem:GpTile} will be proved by taking $A$ to be the following:
\begin{equation}\label{eqn:E0def}
E_0(k,d):=\bigcup_{C\in \mathcal P_0} \Bias(C,k,d).
\end{equation}
We write $E(k,d)$ for the union of \emph{all} the bias cells:
\begin{equation}\label{eqn:Edef}
  E(k,d):=\bigcup_{C\in \mathcal P} \Bias(C,k,d).
\end{equation}
We will see in Lemma \ref{lem:CountBkd} that $E(k,d)$ is the disjoint union of the translates $E_0(k,d)+n\mb 1$, $0\leq n \leq p-1$, so the following lemma will let us estimate $|E_0(k,d)|$.

\begin{lemma}\label{lem:CountE}
For fixed $k\in \mathbb N$, $\varepsilon>0$, and $d$ sufficiently large depending on $k$ and $\varepsilon$, we have
\[|E(k,d)|>(1-\varepsilon)|G_p^d|.\]
\end{lemma}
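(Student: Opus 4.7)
The plan is to reformulate $E(k,d)$ as the set where every coordinate-value count $w(\mb y;t)$ is far from $d/p$, and then bound its complement using a standard anti-concentration estimate for a binomial distribution. The only step requiring any thought is the reformulation itself; once that is in place the bound is a routine Stirling calculation (or a local central limit theorem argument) and should pose no real obstacle.

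First, I claim that $\mb y \in E(k,d)$ if and only if $|w(\mb y;t) - d/p| > k$ for every $t\in \mathbb Z/p\mathbb Z$. Indeed, given such $\mb y$, define $C := \{t \in \mathbb Z/p\mathbb Z : w(\mb y;t) > d/p + k\}$. The identity $\sum_{t}w(\mb y;t) = d$ forces $C$ to be a proper nonempty subset of $\mathbb Z/p\mathbb Z$: if $C = \varnothing$ then every $w(\mb y;t) < d/p - k$, giving $d < p(d/p - k) = d - pk$, while $C = \mathbb Z/p\mathbb Z$ would similarly give $d > d + pk$. Thus $C \in \mathcal P$ and $\mb y \in \Bias(C,k,d) \subseteq E(k,d)$. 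The converse follows immediately from the definition of $\Bias(C,k,d)$ for any $C\in \mathcal P$.

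With this reformulation in hand,
\[
E(k,d)^c = \bigcup_{t\in \mathbb Z/p\mathbb Z} \bigl\{\mb y\in G_p^d : |w(\mb y; t) - d/p| \leq k\bigr\}.
\]
For each fixed $t$, the number of $\mb y\in G_p^d$ with $w(\mb y;t) = j$ equals $\binom{d}{j}(p-1)^{d-j}$, and Stirling's formula bounds the maximum of $\binom{d}{j}(p-1)^{d-j}/p^d$ over $j$ by $O_p(1/\sqrt{d})$. Summing over the at most $2k+1$ integers $j$ with $|j - d/p| \leq k$ gives
\[
\bigl|\{\mb y\in G_p^d: |w(\mb y;t) - d/p|\leq k\}\bigr| \leq (2k+1)\cdot O_p\bigl(|G_p^d|/\sqrt{d}\bigr).
\]
Applying the union bound over the $p$ choices of $t$ yields $|E(k,d)^c| = O_{p,k}(|G_p^d|/\sqrt{d})$, which is less than $\varepsilon|G_p^d|$ as soon as $d$ is sufficiently large in terms of $p$, $k$, and $\varepsilon$.
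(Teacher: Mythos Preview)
Your argument is correct and is essentially the paper's own: pass to the complement, identify it as the union over $t\in\mathbb Z/p\mathbb Z$ of the events $|w(\mb y;t)-d/p|\le k$, count each such event via $\binom{d}{j}(p-1)^{d-j}$, and finish with Stirling and a union bound. Your explicit verification that $E(k,d)$ coincides with $\{\mb y:|w(\mb y;t)-d/p|>k\text{ for all }t\}$ is a nice addition; the paper simply asserts this.

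One small point in your favor: the paper bounds $(p-1)^{d-m}\le(p-1)^d$ and then claims $(p-1)^d M_d/p^d\to 0$ with $M_d=\max_{m\le d/p+k}\binom{d}{m}$, but for $p\ge 3$ that quantity in fact grows exponentially (roughly like $(p-1)^{d/p}$). Your direct estimate $\binom{d}{j}(p-1)^{d-j}/p^d=O_p(1/\sqrt d)$ at $j\approx d/p$ is the correct way to carry out this step, so your version quietly repairs a minor slip in the published proof.
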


\begin{proof}
	We will prove that $E'(k,d):=G_p^d\setminus E(k,d)$ satisfies $\lim_{d\to\infty} |E'(k,d)|/|G_p^d|=0$, which is equivalent to the statement of the lemma.  Note that $E'(k,d)$ is the set of elements $(x_1,\dots, x_d)$ such that $|w(\mb x;t)-\frac{d}{p}|\leq k$ for some $t\in \mathbb Z/p\mathbb Z$.  To estimate $|E'(k,d)|$ it therefore suffices to fix $t\in \mathbb Z/p\mathbb Z$ and $m\in [\tfrac{d}{p}-k, \tfrac{d}{p}+k]$ and count the number of $\mb x$ with $w(\mb x;t)=m$.  The collection of such $\mb x$ can be enumerated by choosing $m$ coordinates of $\mb x$ to be equal to $t$, and filling in the remaining $d-m$ coordinates with any of the $p-1$ elements of $\mathbb Z/p\mathbb Z$ besides $t$.  The number of $\mb x$ such that $w(\mb x;t)=m$ is therefore $(p-1)^{d-m}\binom{d}{m}$.  Summing over the relevant values of $m$ and $t$, we find that
\begin{equation}\label{eqn:EprimeEstimate}
|E'(k,d)|\leq p(2k+1)(p-1)^{d}M_d,
\end{equation}
where $M_d=\max\{\binom{d}{m} : m\leq \tfrac{d}{p}+k \}$. Estimating the binomial coefficients in $M_d$ with Stirling's formula, we have $\lim_{d\to \infty} (p-1)^{d}M_d/p^d=0$ (remembering that $p$ and $k$ are fixed).  Inequality (\ref{eqn:EprimeEstimate}) then implies $\lim_{d\to \infty} |E'(k,d)|/|G_p^d|=0$, as desired. \end{proof}

\begin{lemma}\label{lem:CountBkd}
With $E_0(k,d)$ and $E(k,d)$ as defined in \textup{(\ref{eqn:E0def})} and \textup{(\ref{eqn:Edef})},
\begin{enumerate}
\item[(i)] for all $k, l, d\in \mathbb N$ with $l<k$, we have
\[E_0(k,d)+H_{l}\subseteq E_0(k-l,d).\]

\item[(ii)]	The translates $E_0(k,d), E_0(k,d)+\mb 1,\dots, E_0(k,d)+(p-1)\mb 1$ partition $E(k,d)$.

\item[(iii)] For fixed $k\in \mathbb N $,  $\varepsilon >0$, and sufficiently large $d$, we have \[|E_0(k,d)|> \frac{1-\varepsilon}{p}|G_p^d|.\]
\end{enumerate}

\end{lemma}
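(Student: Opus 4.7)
The plan is to derive all three parts directly from Lemma \ref{lem:Bias} together with the partition property of $\mathcal{P}_0$ chosen in \S\ref{sec:Assembly} and the counting Lemma \ref{lem:CountE}. Each part is essentially a bookkeeping exercise once those ingredients are in place.

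For part (i), I would just unfold the definition $E_0(k,d)=\bigcup_{C\in\mathcal P_0}\Bias(C,k,d)$ and apply Lemma \ref{lem:Bias}(iv) termwise: each summand satisfies $\Bias(C,k,d)+H_l\subseteq \Bias(C,k-l,d)$, so taking the union over $C\in\mathcal P_0$ gives the required containment in $E_0(k-l,d)$.

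For part (ii), the key is that translation by $\mathbf 1$ intertwines the $\Bias$ cells with the $\tau$-action on $\mathcal P$. By Lemma \ref{lem:Bias}(i),
\[
E_0(k,d)+n\mathbf 1 \;=\; \bigcup_{C\in\mathcal P_0}\Bias(C+n,k,d) \;=\; \bigcup_{C\in\mathcal P_0+n}\Bias(C,k,d).
\]
Since $\{\mathcal P_0, \mathcal P_0+1, \dots, \mathcal P_0+(p-1)\}$ was chosen to be a partition of $\mathcal P$, taking the union of these translates over $n=0,\dots,p-1$ recovers $\bigcup_{C\in\mathcal P}\Bias(C,k,d)=E(k,d)$. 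Mutual disjointness of the $p$ translates follows from Lemma \ref{lem:Bias}(ii), because distinct $C\in\mathcal P$ give disjoint bias cells and the $\mathcal P_0+n$ are pairwise disjoint by construction.

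For part (iii), I would invoke part (ii) to conclude $|E(k,d)|=p\,|E_0(k,d)|$ (all $p$ translates have the same cardinality since translation is a bijection), and then use Lemma \ref{lem:CountE}: for fixed $k$ and $\varepsilon$, choose $d$ large enough that $|E(k,d)|>(1-\varepsilon)|G_p^d|$, whence $|E_0(k,d)|>\tfrac{1-\varepsilon}{p}|G_p^d|$. There is no real obstacle here; the only substantive content has already been absorbed into Lemma \ref{lem:Bias} and Lemma \ref{lem:CountE}, with the primeness of $p$ used only to guarantee (via the orbit partition $\mathcal P_0$) that part (ii) produces exactly $p$ disjoint translates filling $E(k,d)$.
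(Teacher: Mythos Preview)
Your proof is correct and follows essentially the same approach as the paper: all three parts are derived from Lemma \ref{lem:Bias} and the partition property of $\mathcal P_0$, with Lemma \ref{lem:CountE} supplying the count in part (iii). Your presentation of part (ii) is slightly slicker, writing $E_0(k,d)+n\mathbf 1$ directly as $\bigcup_{C\in\mathcal P_0+n}\Bias(C,k,d)$ so that both covering and disjointness follow at once from the partition of $\mathcal P$, but the underlying argument is identical to the paper's.
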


\begin{proof} Part (i) follows from the definition of $E_0(k,d)$ and Part (iv) of Lemma \ref{lem:Bias}.

Now to prove Part (ii).  To show that the sets $E_0(k,d), E_0(k,d)+\mb 1, \dots,E_0(k,d)+(p-1)\mb 1$ are mutually disjoint, fix $n\neq m \in \{0,1,\dots, p-1\}$.  We will show that $E_0(k,d)+n\mb 1$ is disjoint from $E_0(k,d)+m\mb 1$. It suffices to prove that if $C, C'\in \mathcal P_0$ (not necessarily distinct), then $\Bias(C+n,k,d)$ is disjoint from $\Bias(C'+m, k,d)$, as Part (i) of Lemma \ref{lem:Bias} implies
\[E_0(k,d)+n=\bigcup_{C\in \mathcal P_0} \Bias(C+n, k,d), \qquad E_0(k,d)+m=\bigcup_{C\in \mathcal P_0} \Bias(C+m, k,d).\] Our choice of $\mathcal P_0$ implies that if $C, C'\in \mathcal P_0$ and $m\neq n$, we have $C+n\neq C'+m$.  Part (ii)  of Lemma \ref{lem:Bias} then implies $\Bias(C+n,k,d)\cap \Bias(C'+m,k,d)=\varnothing$, as desired.

To see that the union of the translates $E_0(k,d)+m\mb 1$ is $E(k,d)$, it suffices to prove that for each $C\in \mathcal P$, there is an $m$ such that $\Bias(C,k,d)\subseteq E_0+m\mb 1$.  Our choice of $\mathcal P_0$ means that for all $C\in \mathcal P$, there exists $m\in \mathbb Z/p\mathbb Z$ such that $C-m\in \mathcal P_0$, and the definition of $E_0$ means that $\Bias(C-m,k,d)\subseteq E_0$.  We then have $\Bias(C-m,k,d)+m\mb 1\subseteq E_0+m\mb 1$, and Lemma \ref{lem:Bias} simplifies the left hand side of this containment to $\Bias(C,k,d)$.  We have therefore shown $\Bias(C,k,d)\subseteq E_0+m\mb 1$, as desired.

Finally, the estimate in Part (iii) follows from the estimate on $E(k,d)$ in Lemma \ref{lem:CountE} and the fact that the translates $E_0(k,d)+m\bm 1$, $0\leq m \leq p-1$, partition $E(k,d)$ and all have the same cardinality. \end{proof}

\begin{proof}[Proof of Lemma \ref{lem:GpTile}]To prove Lemma \ref{lem:GpTile}, we fix $k\in \mathbb N$, a prime $p$, and $\varepsilon>0$.   Use part (iii) of  Lemma \ref{lem:CountBkd} to choose $d$ sufficiently large that $|E_0(k+1,d)|>\frac{1-\varepsilon}{p}|G_p^d|$.  Let $A=E_0(k+1,d)$, and let $A_1=E_0(1,d)$.  Now Part (i) of Lemma \ref{lem:CountBkd} implies $A+H_k\subseteq A_1$, and Part (ii) of Lemma \ref{lem:CountBkd} implies that the sets $A_1+n\mb 1$, $0\leq n \leq p-1$, are mutually disjoint.  The containment $A\subseteq A_1$ follows from the containment $A+H_k\subseteq A_1$ and the fact that $\mb 0 \in H_k$.  This completes the proof of Lemma \ref{lem:GpTile}.
\end{proof}

\section{Copying sets from \texorpdfstring{$G_N^d$}{Gpd} into \texorpdfstring{$\mathbb T^d$}{Td}}\label{sec:Copying}
Fix $N, d\in \mathbb N$.  As in the previous section, $G_N^d$ is the group $(\mathbb Z/N\mathbb Z)^d$.  In this section we  present a standard way of associating subsets of $\mathbb T^d$  to subsets of  $G_N^d$.  Under this association, the containment $A+H_k\subseteq A_1$ in Lemma \ref{lem:GpTile} will yield near invariance of the associated sets under translation by elements of an \emph{approximate Hamming ball}, which we describe in Definition \ref{def:ApproxHamm}.  The near invariance mentioned here is proved in Lemma \ref{lem:HammingToApproximateHamming}.

Write $\mu$ for Haar probability measure on $\mathbb T^d$. Let $\phi:G_N^d\to \mathbb T^d$ be the homomorphism given by
\[
\phi(x_1,\dots, x_d):= (x_1/N, \dots, x_d/N).
\]
For a given $\varepsilon\geq 0$, let \[Q_{N,\varepsilon}:=\bigl[\varepsilon,\tfrac{1}{N}-\varepsilon\bigr)^d\subseteq \mathbb T^d.\] This is simply a half-open cube of side length $\frac{1}{N}-2\varepsilon$.  If $A\subseteq G_N^d$, define $A^{\ssquare}_{\varepsilon}\subseteq \mathbb T^d$ by \[A^{\ssquare}_{\varepsilon}:=\phi(A)+Q_{N,\varepsilon},\]
so that $A^{\ssquare}_{\varepsilon}$ is a disjoint union of translates of $Q_{N,\varepsilon}$.  The cubes $Q_{N,0}$ tile $\mathbb T^d$: we have $\mathbb T^d = \phi(G_N^d) + Q_{N,0}$.
The next lemma records the basic properties of this construction.

\begin{lemma}\label{lem:CopyBasic} Let $A, B\subseteq G_N^d$ and $\varepsilon\geq 0$.  Then
\begin{enumerate}
\item[(i)]  $(A\cap B)^{\ssquare}_\varepsilon = A^{\ssquare}_\varepsilon\cap B^{\ssquare}_\varepsilon$,
	
\item[(ii)]  $\mu(A^{\ssquare}_\varepsilon) = |A|(\frac{1}{N}-2\varepsilon)^d$,

\item[(iii)] $\lim_{\varepsilon\to 0^+} \mu(A^{\ssquare}_\varepsilon) = \mu(A^{\ssquare}_0) = |A|N^{-d}$,

\item[(iv)] If $\delta\leq \varepsilon$ and $\mb y\in \mathbb T^d$ has $\|\mb y\|< \delta$, then $ A^{\ssquare}_\varepsilon + \mb y \subseteq  A^{\ssquare}_{\varepsilon-\delta}$.
\end{enumerate}
\end{lemma}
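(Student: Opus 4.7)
The plan is to exploit two basic facts about the copying operation: first, that $\phi:G_N^d\to\mathbb T^d$ is an injective homomorphism whose image is the subgroup of $N$-torsion points; and second, that the half-open cubes $\phi(\mathbf x)+Q_{N,0}$ with $\mathbf x\in G_N^d$ tile $\mathbb T^d$, a fact already observed just before the lemma. Since $Q_{N,\varepsilon}\subseteq Q_{N,0}$ for every $\varepsilon\geq 0$, the translates $\phi(a)+Q_{N,\varepsilon}$ with $a\in A$ are mutually disjoint, so $A^{\ssquare}_\varepsilon$ is literally a disjoint union of $|A|$ copies of $Q_{N,\varepsilon}$. This disjointness is the engine for everything that follows, and all four parts are essentially bookkeeping on top of it.

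For Part (i), the inclusion $(A\cap B)^{\ssquare}_\varepsilon\subseteq A^{\ssquare}_\varepsilon\cap B^{\ssquare}_\varepsilon$ is immediate from $A\cap B\subseteq A$ and $A\cap B\subseteq B$. Conversely, if $\mathbf y$ lies in both $A^{\ssquare}_\varepsilon$ and $B^{\ssquare}_\varepsilon$, I write $\mathbf y=\phi(a)+\mathbf q_1=\phi(b)+\mathbf q_2$ with $a\in A$, $b\in B$, and $\mathbf q_1,\mathbf q_2\in Q_{N,\varepsilon}\subseteq Q_{N,0}$; the tiling property then forces $\phi(a)=\phi(b)$, injectivity of $\phi$ gives $a=b\in A\cap B$, and therefore $\mathbf y\in(A\cap B)^{\ssquare}_\varepsilon$. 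For Part (ii), I combine the disjointness observation with translation invariance of Haar measure to get $\mu(A^{\ssquare}_\varepsilon)=|A|\mu(Q_{N,\varepsilon})=|A|(\tfrac{1}{N}-2\varepsilon)^d$ (understood to be $0$ when $\varepsilon\geq\tfrac{1}{2N}$). Part (iii) is then just evaluating the formula in Part (ii) at $\varepsilon=0$ and taking the one-sided limit.

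For Part (iv), I would verify coordinatewise that $Q_{N,\varepsilon}+\mathbf y\subseteq Q_{N,\varepsilon-\delta}$ whenever $\|\mathbf y\|<\delta\leq\varepsilon$: each coordinate of a point in $Q_{N,\varepsilon}$ lies in $[\varepsilon,\tfrac{1}{N}-\varepsilon)$, and shifting it by something of $\|\cdot\|$-norm less than $\delta$ lands it in the open interval $(\varepsilon-\delta,\tfrac{1}{N}-\varepsilon+\delta)$, which is a subset of the half-open interval $[\varepsilon-\delta,\tfrac{1}{N}-(\varepsilon-\delta))$. Translating by $\phi(a)$ for each $a\in A$ then gives $A^{\ssquare}_\varepsilon+\mathbf y\subseteq A^{\ssquare}_{\varepsilon-\delta}$. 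The only even mildly delicate point in the whole lemma is confirming that the strict/weak endpoint conventions line up in Part (iv); this is routine bookkeeping rather than a genuine obstacle, and once the tiling picture is in hand the lemma is immediate.
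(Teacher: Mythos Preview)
Your proof is correct and follows essentially the same approach as the paper's own proof, which dispatches each part in a single sentence (e.g., ``Part (iv) follows from the observation that if $\|\mathbf y\|<\delta$, then $Q_{N,\varepsilon}+\mathbf y\subseteq Q_{N,\varepsilon-\delta}$''). You have simply supplied the routine details that the paper leaves implicit, and your treatment of the endpoint conventions in Part~(iv) is correct.
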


\begin{proof} Part (i) follows immediately from the definitions.  We get Part (ii) by observing that $A^{\ssquare}_\varepsilon$ is a disjoint union of $|A|$ cubes in $\mathbb T^d$ having side length $\frac{1}{N}-2\varepsilon$.  Part (iii) follows immediately from Part (ii). Part (iv) follows from the observation that if $\|\mb y\|< \delta$, then $Q_{N,\varepsilon}+\mb y\subseteq  Q_{N,\varepsilon-\delta}$.  \end{proof}

The important consequence of Part (i) in Lemma \ref{lem:CopyBasic} is that when $A_1, \dots, A_j  \subseteq  G_N^d$ are mutually disjoint, the corresponding sets $(A_1)^{\ssquare}_0,\dots, (A_j)^{\ssquare}_0 \subseteq \mathbb T^d$ are mutually disjoint.

 Recall from \S\ref{sec:BHsubsec}  that for $\mb x\in \mathbb T^d$ and $\varepsilon>0$, we defined   $w_{\varepsilon}(\mb x):=|\{j:\|x_j\|\geq \varepsilon\}|$.

\begin{definition}\label{def:ApproxHamm}
  For $k< d\in \mathbb N$ and $\varepsilon>0$, we define the \emph{approximate Hamming ball} of radius $(k,\varepsilon)$ around $0_{\mathbb T^d}$ as
  \[
  \Hamm(k,\varepsilon):=\{\mb x\in \mathbb T^d: w_{\varepsilon}(\mb x)\leq k\}.
  \]
So $\Hamm(k,\varepsilon)$ is the set of $\mb x=(x_1,\dots,x_d)\in \mathbb T^d$ where at most $k$ coordinates differ from $0$ by at least $\varepsilon$.
\end{definition}

The following lemma is crucial in deriving the containment $R^nE_0\subseteq E_1$ (for $n\in BH$) in Proposition \ref{prop:RohlinTd} from the containment $A+H_k\subseteq A_1$ in Lemma \ref{lem:GpTile}.  For a set $B\subseteq \mathbb T^d$ we use $\bar{B}$ to denote its topological closure.

\begin{lemma}\label{lem:HammingToApproximateHamming}
	Let $\varepsilon\geq \eta >0$, $k< d\in \mathbb N$, and   $A\subseteq G_N^d$.  Let $U:=\Hamm(k,\eta)\subseteq \mathbb T^d$ and $H:=H_k\subseteq G_N^d$, as in \S\textup{\ref{sec:HammField}}.  Then
	\begin{enumerate}
		\item[(i)] $A^{\ssquare}_\varepsilon+U\subseteq  (A+H)^{\ssquare}_{0}$.
	
	\item[(ii)]  If $\varepsilon>\eta$, then $\overline{A^{\ssquare}_\varepsilon+U}\subseteq  (A+H)^{\ssquare}_{0}.$
	\end{enumerate}
\end{lemma}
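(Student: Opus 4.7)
My plan is to prove both parts by a single coordinate-by-coordinate construction of a correction $\mb{h} \in H$; the closure in Part~(ii) is then handled by a standard subsequence argument. Given $\mb{z} = \phi(\mb{a}) + \mb{q} + \mb{u}$ with $\mb{a} \in A$, $\mb{q} \in Q_{N,\varepsilon}$, and $\mb{u} \in U$, set $J := \{j : \|u_j\| \geq \eta\}$, so $|J| \leq k$ since $\mb{u} \in \Hamm(k,\eta)$. I define $\mb{h} \in G_N^d$ to vanish off $J$, and for $j \in J$ I let $h_j \in \mathbb{Z}/N\mathbb{Z}$ be the unique element making the representative of $q_j + u_j - h_j/N$ lie in $[0, \tfrac{1}{N})$. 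Since $\mb{h}$ is supported on $J$, $\mb{h} \in H_k = H$, and by construction $\mb{z} = \phi(\mb{a} + \mb{h}) + \mb{q}'$ with $\mb{q}' \in [0, \tfrac{1}{N})^d = Q_{N,0}$, giving $\mb{z} \in (A+H)^{\ssquare}_0$.

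The substantive input is the claim that $h_j = 0$ really works for $j \notin J$. Using a representative $\tilde{u}_j \in (-\eta, \eta)$ of $u_j$ and the bound $q_j \in [\varepsilon, \tfrac{1}{N} - \varepsilon)$, one gets
\[
q_j + \tilde{u}_j \in (\varepsilon - \eta,\, \tfrac{1}{N} - \varepsilon + \eta) \subseteq [0, \tfrac{1}{N}),
\]
where the containment uses $\varepsilon \geq \eta$. So the sum already lies in the fundamental interval, no shift is needed, and Part~(i) follows.

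For Part~(ii), take $\mb{z}_n \to \mb{z}$ with decompositions $\mb{z}_n = \phi(\mb{a}_n) + \mb{q}_n + \mb{u}_n$ as above. I extract subsequences so that (a) $\mb{a}_n \equiv \mb{a}$, using finiteness of $A$; (b) $J_n := \{j : \|u_{n,j}\| \geq \eta\}$ is a constant set $J$ with $|J| \leq k$, using that there are only finitely many subsets of $\{1,\dots,d\}$ of that size; and (c) $\mb{q}_n \to \mb{q} \in [\varepsilon, \tfrac{1}{N}-\varepsilon]^d$ and $\mb{u}_n \to \mb{u}$ with $\|u_j\| \leq \eta$ for $j \notin J$, using compactness of $\mathbb{T}^d$. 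Running the coordinate construction on the limit triple, the analogous estimate at $j \notin J$ becomes $q_j + \tilde{u}_j \in [\varepsilon - \eta, \tfrac{1}{N} - \varepsilon + \eta]$, and the strict hypothesis $\varepsilon > \eta$ of Part~(ii) promotes this to a subset of $(0, \tfrac{1}{N})$, so again $h_j = 0$ works outside $J$ and the same $\mb{h}$ witnesses $\mb{z} \in (A+H)^{\ssquare}_0$.

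The only delicate point, and the main obstacle, is precisely this boundary control in Part~(ii). If one only had $\varepsilon \geq \eta$, a limit coordinate outside $J$ could settle onto $0$ or $\tfrac{1}{N}$, forcing a nonzero $h_j$ at an index not counted by $|J|$ and potentially breaking the Hamming bound $|\operatorname{supp}(\mb{h})| \leq k$. The strict gap $\varepsilon > \eta$ cleanly eliminates that possibility and lets the coordinatewise recipe carry over to the limit without change.
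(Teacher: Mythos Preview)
Your argument is correct. Part~(i) is essentially the paper's proof, phrased coordinatewise rather than via the decomposition $\mb u=\mb y+\mb z$ with $\|\mb y\|<\eta$ and $\mb z$ supported on at most $k$ coordinates; the content is identical.

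For Part~(ii) you take a genuinely different route. The paper avoids the subsequence machinery entirely: it picks intermediate parameters $\varepsilon>\varepsilon'>\eta'>\eta$, observes that $\overline{A^{\ssquare}_\varepsilon}\subseteq A^{\ssquare}_{\varepsilon'}$ and $\overline{\Hamm(k,\eta)}\subseteq \Hamm(k,\eta')$, and then uses compactness of $\mathbb T^d$ to write $\overline{A^{\ssquare}_\varepsilon+U}=\overline{A^{\ssquare}_\varepsilon}+\overline{U}\subseteq A^{\ssquare}_{\varepsilon'}+\Hamm(k,\eta')$, which lands inside $(A+H)^{\ssquare}_0$ by Part~(i). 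This is shorter and isolates exactly where the strict inequality $\varepsilon>\eta$ is spent (namely, to create the wiggle room $\varepsilon',\eta'$). Your direct limit argument works too, and has the virtue of being self-contained---it does not invoke the identity $\overline{X+Y}=\overline{X}+\overline{Y}$ for compact $X,Y$---but the bookkeeping (stabilising $\mb a_n$, $J_n$, extracting convergent $\mb q_n,\mb u_n$, then rerunning the coordinate estimate with closed intervals) is heavier than necessary.
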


\begin{proof}
To prove Part (i), note that the left hand side therein is $\phi(A)+Q_{N,\varepsilon}+U$, and the right hand side simplifies as $\phi(A + H)+Q_{N,0}=\phi(A)+\phi(H)+Q_{N,0}$.  It therefore suffices to prove that
\begin{equation}\label{eqn:QUpHQ}
  Q_{N,\varepsilon}+U\subseteq  \phi(H)+Q_{N,0}.
\end{equation}
To prove this containment, let $\mb u\in U$ with the aim of showing $Q_{N,\varepsilon}+\mb u\subseteq  \phi(H)+Q_{N,0}$.  This $\mb u$ can be written as $\mb y + \mb z$, where $\|\mb y\|<\eta$ and $\mb z=(z_1,\dots,z_d)$ has at most $k$ nonzero coordinates.  Part (iv) of Lemma \ref{lem:CopyBasic} implies $Q_{N,\varepsilon}+\mb y + \mb z  \subseteq  Q_{N,0}+\mb z$,  so we must show that
\begin{equation}\label{eqn:QwPhiQ}
Q_{N,0}+\mb z \subseteq  \phi(H)+Q_{N,0}.
\end{equation}
The left hand side above is the set of $\mb x$ in $\mathbb T^d$ where at most $k$ coordinates of $\mb x$ lie outside $\bigl[0,\frac{1}{N}\bigr)$.  Fixing such an $\mb x$ as $(x_1,\dots,x_d)$, we will show that $\mb x\in \phi(H)+Q_{N,0}$.  For each $j$, choose $h_j\in \{0,\dots,N-1\}$ so that $x_j\in \bigl[\frac{h_j}{N}, \frac{h_j+1}{N}\bigr)$.  Then $h_j\neq 0$ for at most $k$ indices $j$, since at most $k$ coordinates of $\mb x$ lie outside $\bigl[0,\frac{1}{N}\bigr)$.  Setting $\mb h= (h_1 \operatorname{mod} N,\dots, h_d \operatorname{mod} N)$, we have $\mb h\in H$, and $\mb x\in \phi(\mb h)+Q_{N,0}$.  This proves the containment (\ref{eqn:QwPhiQ}), and therefore establishes (\ref{eqn:QUpHQ}), concluding the proof of Part (i).

To prove Part (ii), assume $\varepsilon>\eta>0$, and choose $\varepsilon'$ and $\eta'$ so that $\varepsilon>\varepsilon'>\eta'>\eta$.  Let $U':=\Hamm(k,\eta')$.  Our choice of $\varepsilon'$ and $\eta'$ means that $\overline{A^{\ssquare}_{\varepsilon}}\subseteq A^{\ssquare}_{\varepsilon'}$ and $\overline{U}\subseteq U'$.  We therefore have $\overline{A^{\ssquare}_{\varepsilon} + U} = \overline{A^{\ssquare}_{\varepsilon}}+\overline{U} \subseteq A^{\ssquare}_{\varepsilon'}+U'\subseteq (A+H)^{\ssquare}_{0}$, where the last containment is an instance of Part (i). \end{proof}

\section{Rohlin towers for torus rotations; proof of Proposition \ref{prop:RohlinTd}}\label{sec:RohlinTowers}

The following lemma is a restatement of Proposition \ref{prop:RohlinTd}.  It is proved by associating the sets provided by Lemma \ref{lem:GpTile} to subsets of $\mathbb T^d$ using the machinery of \S\ref{sec:Copying}.

\begin{lemma}\label{lem:TileT}
For all $k\in \mathbb N$, every prime $p$, and all $\varepsilon>0$, there exist $d\in \mathbb N$, $\eta>0$, sets $E, E'\subseteq \mathbb T^d$, a generator $\bm\alpha\in \mathbb T^d$, and an approximate Hamming ball $U:=\Hamm(k,\eta)\subseteq \mathbb T^d$ such that
\begin{enumerate}
  \item[(i)] the translates  $E', E'+\bm\alpha, \dots, E'+(p-1)\bm\alpha$,
 are mutually disjoint,
 \item[(ii)] $\mu(E)> \frac{1-\varepsilon}{p}$, and

  \item[(iii)]  $E+U\subseteq E'$.
\end{enumerate}
Consequently, the Bohr-Hamming ball $BH:=BH(\bm\alpha;k,\eta)$ satisfies $E+BH\bm\alpha\subseteq E'$, and thus $E\subseteq E'$.
\end{lemma}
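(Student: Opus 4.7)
The plan is to lift the combinatorial data produced by Lemma \ref{lem:GpTile} to the torus $\mathbb T^d$ via the $\square$-construction from \S\ref{sec:Copying}. Given $k$, $p$, and $\varepsilon$, first invoke Lemma \ref{lem:GpTile} with a sufficiently small parameter $\varepsilon' < \varepsilon$ to produce $d \in \mathbb{N}$ and $A, A_1 \subseteq G_p^d$ satisfying $|A| > (1-\varepsilon')|G_p^d|/p$, $A + H_k \subseteq A_1$, and the pairwise disjointness in $G_p^d$ of the translates $A_1, A_1+\mb{1}, \ldots, A_1+(p-1)\mb{1}$.

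Next I would fix positive parameters $\rho, \eta, \varepsilon_1$ with $\rho + \eta < \varepsilon_1 < 1/(2p)$ and set $U := \Hamm(k,\eta)$, $E := A^{\ssquare}_{\varepsilon_1}$, and $E' := (A_1)^{\ssquare}_\rho$. For the rotation vector I would take $\bm\alpha := \phi(\mb{1}) + \bm\beta$, where $\phi(\mb{1}) = (1/p, \ldots, 1/p)$ and $\bm\beta \in \mathbb{T}^d$ is a generic small perturbation making $\bm\alpha$ independent (the independent vectors form a dense subset of $\mathbb{T}^d$), chosen so that $\|n\bm\beta\|_\infty < \rho$ for every $|n| \leq p-1$.

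Conditions (ii) and (i) will follow readily from the tools of \S\ref{sec:Copying}. Condition (ii) is Lemma \ref{lem:CopyBasic}(ii): $\mu(E) = |A|(1/p - 2\varepsilon_1)^d$, which exceeds $(1-\varepsilon)/p$ once $\varepsilon'$ and $\varepsilon_1$ are taken small enough. For (i), rewrite $E' + n\bm\alpha = (A_1 + n\mb{1})^{\ssquare}_\rho + n\bm\beta$; the bound $\|n\bm\beta\|_\infty < \rho$ together with Lemma \ref{lem:CopyBasic}(iv) places this translate inside $(A_1 + n\mb{1})^{\ssquare}_0$, and these containing cube-unions are pairwise disjoint by Lemma \ref{lem:CopyBasic}(i) combined with the disjointness of the $A_1 + n\mb{1}$ in $G_p^d$.

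The principal obstacle will be condition (iii): $E + U \subseteq E'$. Lemma \ref{lem:HammingToApproximateHamming}(i) yields $E + U \subseteq (A + H_k)^{\ssquare}_0 \subseteq (A_1)^{\ssquare}_0$, but the target $E' = (A_1)^{\ssquare}_\rho$ is a strictly smaller union of shrunken cubes. Closing this gap requires sharpening the argument in Lemma \ref{lem:HammingToApproximateHamming}: in the decomposition $\mb{u} = \mb{y} + \mb{z}$ of an element $\mb u \in U$, with $\|\mb y\|<\eta$ and $\mb z$ supported on at most $k$ coordinates, one must choose $\mb h \in H_k$ so that each coordinate of $\mb z - \phi(\mb h)$ lies within the half-width-$(\varepsilon_1 - \eta - \rho)$ safe band around $0 \in \mathbb T$. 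Since this is not automatic for arbitrary $\mb z$, I anticipate that the execution will require modifying the construction---for instance, replacing $A_1$ by a Hamming-enlargement $A_1 + H_j$ for a modest buffer $j$, while exploiting that $A_1$ can be taken of the biased form $E_0(l,d)$ with $l$ sufficiently large to preserve the $p$-fold disjointness needed for (i). Once (i)--(iii) are established, the consequent assertion $E + BH\bm\alpha \subseteq E'$ follows from the inclusion $BH\bm\alpha \subseteq U$ (immediate from the definition of the Bohr-Hamming ball), and $E \subseteq E'$ follows by taking $n = 0$.
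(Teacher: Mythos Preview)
Your overall strategy---lift the finite-group data from Lemma~\ref{lem:GpTile} to $\mathbb T^d$ via the $\square$-construction, then perturb $\phi(\mb 1)$ to an independent $\bm\alpha$---matches the paper. Your treatment of (i) and (ii) is correct. The gap is precisely where you flag it: condition (iii) with the choice $E'=(A_1)^{\ssquare}_\rho$ genuinely fails, and your proposed repair does not work. The obstruction is not a Hamming-direction deficit in $A_1$; it is that the shrunken cubes $\phi(\mb a_1)+Q_{p,\rho}$ leave uncovered bands of width $2\rho$ in every coordinate of $\mathbb T$. On a ``bad'' coordinate $j$ (one of the at most $k$ indices where $z_j\neq 0$), the value $q_j+y_j+z_j$ can land in such a band, and then \emph{no} choice of $h_j\in\mathbb Z/p\mathbb Z$ places $q_j+y_j+z_j-h_j/p$ inside $[\rho,\tfrac1p-\rho)$. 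Enlarging $A_1$ to $A_1+H_j$, or starting from $E_0(l,d)$ with larger $l$, only adds more shrunken cubes; it does not fill those bands.

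The paper sidesteps the problem by reversing the roles of (i) and (iii): it \emph{defines}
\[
E:=A^{\ssquare}_{2\eta},\qquad U:=\Hamm(k,\eta),\qquad E':=\overline{E+U},
\]
so that (iii) holds by construction. Part~(ii) of Lemma~\ref{lem:HammingToApproximateHamming} (which uses the strict inequality $2\eta>\eta$) gives $E'\subseteq (A+H_k)^{\ssquare}_0\subseteq (A_1)^{\ssquare}_0$, whence the translates $E'+n\phi(\mb 1)$ lie in the pairwise disjoint sets $(A_1+n\mb 1)^{\ssquare}_0$. Now $E'$ is \emph{compact}, so these disjoint compact translates are at positive distance from one another, and any $\bm\alpha$ sufficiently close to $\phi(\mb 1)$ still yields disjoint translates $E'+n\bm\alpha$; pick such an $\bm\alpha$ independent. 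In short: rather than shrinking $E'$ to create a buffer for (i) and then struggling with (iii), take $E'$ to be exactly the closure of what (iii) demands, and let compactness supply the buffer for (i).
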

Proposition \ref{prop:RohlinTd} follows from Lemma \ref{lem:TileT}, as Part (i) here asserts that $\{R^nE':0\leq n \leq p-1\}$ is a Rohlin tower for the torus rotation on $\mathbb T^d$ by $\bm\alpha$, and the containment $E\subseteq E'$ then implies $\{R^nE : 0\leq n \leq p-1\}$ is a Rohlin tower as well.  The containment $E+BH\bm\alpha$ here is the part of Proposition \ref{prop:RohlinTd} asserting $R^nE\subseteq E'$ for all $n\in BH$.

\begin{proof}  Fix $k\in \mathbb N$, a prime $p$, and $\varepsilon>0$.  By Lemma \ref{lem:GpTile}, choose $d$ sufficiently large and $A$, $A_1\subseteq (\mathbb Z/p\mathbb Z)^d$ such that  the translates
\begin{equation}\label{eqn:AHDisjoint}
A_1, A_1+\mb 1,\dots, A_1+(p-1)\mb 1 \quad \text{are mutually disjoint},
\end{equation} $|A|>\frac{1-\varepsilon/2}{p}p^d$, $A\subseteq A_1$, and $A+H_k\subseteq A_1$. We fix these choices of $A$ and $A_1\subseteq (\mathbb Z/p\mathbb Z)^d$ and use them to select $\bm\alpha\in \mathbb T^d$, $E$, and $E'\subseteq \mathbb T^d$.  We use the definitions of $\phi$ and $(\cdot)^{\square}_{\varepsilon}$ established in \S\ref{sec:Copying},  so that $\phi:(\mathbb Z/p\mathbb Z)^d\to \mathbb T^d$ and $\phi(\mb 1)=(1/p,\dots,1/p)\in \mathbb T^d$.

The disjointness in (\ref{eqn:AHDisjoint}) and Part (i) of Lemma \ref{lem:CopyBasic} imply that the sets $(A_1+n\mb 1)^{\ssquare}_{0}$, $n\in \{0,\dots,p-1\}$, are mutually disjoint.  By our definition of $(\cdot)^{\ssquare}_{0}$ and $\phi$, this means that the translates \begin{equation}\label{eqn:DisjointA1square}
(A_1)^{\ssquare}_{0},\, (A_1)^{\ssquare}_{0} + \phi(\mb 1),\, \dots\, ,\, (A_1)^{\ssquare}_{0} + (p-1)\phi(\mb 1) \quad \text{are mutually disjoint.}
\end{equation}

Now we specify the sets $E$, $U$, and $E'$.  Part (iii) of Lemma \ref{lem:CopyBasic} provides an $\eta>0$ satisfying $\mu(A^{\ssquare}_{2\eta})> (1-\frac{\varepsilon}{2})|A|p^{-d}$; our choices of $\eta$ and $A$ then guarantee that $\mu(A^{\ssquare}_{2\eta})> \frac{1-\varepsilon}{p}$.  Let
\[
E:=A^{\ssquare}_{2\eta}, \qquad U:=\Hamm(k,\eta), \qquad   E':=\overline{A^{\ssquare}_{2\eta}+U}.
\]
 Parts (ii) and (iii) of the present lemma are evidently satisfied by this  $E$.  To find $\bm\alpha$ satisfying Part (i), we first observe that Lemma \ref{lem:HammingToApproximateHamming} implies $E'\subseteq (A+H_k)^{\ssquare}_{0}$. Our choice of $A$ and $A_1$ then implies $E'\subseteq (A_1)^{\ssquare}_{0}.$  This containment and (\ref{eqn:DisjointA1square}) imply that the translates
 \begin{equation}\label{eqn:E1phi1}
 	E', E'+\phi(\mb 1),\dots, E'+(p-1)\phi(\mb 1)
 \end{equation} are mutually disjoint.  They are all compact, as well, so for every $\bm\alpha$ sufficiently close to $\phi(\mb 1)$, the translates
\begin{equation}\label{eqn:phi1y}
E', E'+\bm\alpha, \dots, E'+(p-1)\bm\alpha \quad \text{are mutually disjoint.}
\end{equation}
  In particular, we can choose such an $\bm\alpha$ to be a generator, and with this $\bm\alpha$ the disjointness in (\ref{eqn:phi1y}) implies the disjointness asserted in Part (i) of the lemma.

Finally, the containment $E+BH\bm\alpha\subseteq E'$ follows from the containment $E+U\subseteq E'$ and the fact that $n\in BH(\bm\alpha;k,\eta)$ if and only if $n\bm\alpha \in U$.
\end{proof}

\section{Proof of Lemma \ref{lem:BHiskBohrDense}}\label{sec:Independence}

As in \S\ref{sec:BohrNhoods}, for a given  $\alpha\in \mathbb T$ we write $\tilde{\alpha}$ for its representative in $[0,1)\subseteq \mathbb R$.  Given $E\subseteq \mathbb R$, we write $\operatorname{span}E$ for the set of rational linear combinations of elements of $E$ (i.e. the $\mathbb Q$-linear span). We will abbreviate sets of indexed elements $\{x_1,\dots,x_d\}$ as $\{x_i\}$; for example, $\operatorname{span}\{x_1,\dots,x_d\}$ may be written as $\operatorname{span}\{x_i\}$. We also suppress the index of summation in sums, where it will cause no confusion.

We need the following standard facts from harmonic analysis, presented in references such as \cite{FollandHABook,KaznelsonHA,MorrisDualityBook,RudinGroupsBoook}. An \emph{additive character} of $\mathbb T^d$ is a continuous homomorphism from $\mathbb T^d$ to $\mathbb T$.

\begin{enumerate}
  \item[$\bullet$] Every additive character of $\mathbb T^d$ has the form $(x_1,\dots,x_d)\mapsto \sum n_ix_i$ for some fixed $(n_1,\dots,n_d)\in \mathbb Z^d$.
  \item[$\bullet$] If $K$ and $L$ are closed subgroups of $\mathbb T^d$ and $K$ is properly contained in $L$, then there is an additive character of $\mathbb
T^d$ which vanishes on $K$ and not on $L$.
\item[$\bullet$] (Kronecker's characterization) $\bm\alpha = (\alpha_1,\dots,\alpha_d)\in \mathbb T^d$ is a generator if and only if $\{\tilde{\alpha}_1,\dots,\tilde{\alpha}_d,1\}$ is linearly independent over $\mathbb Q$.
\end{enumerate}

The next lemma is a variant of Kronecker's characterization, and is proved in essentially the same way.  Given an element $g$ of a group, we use $\langle g\rangle$ to denote the subgroup generated by $g$.  We write $\overline{A}$ for the topological closure of a subset $A$ of a topological space.

\begin{lemma}\label{lem:RelativeKronecker}
  Let $r,k\in \mathbb N$, and suppose $\bm\alpha=(\alpha_1,\dots,\alpha_r)\in \mathbb T^r$ and $\bm\beta=(\beta_1,\dots,\beta_k)\in \mathbb T^k$  are such that   $\operatorname{span}\{\tilde{\alpha}_i\}\cap \operatorname{span}(\{\tilde{\beta}_i\}\cup \{1\})=\{0\}$. Then
  \begin{enumerate}
  \item[(i)] in $\mathbb T^{r}\times \mathbb T^{k}$ we have
  $\overline{\langle (\bm\alpha,\bm\beta) \rangle} = \overline{\langle \bm\alpha\rangle}\times \overline{\langle \bm\beta\rangle}$.
  \item[(ii)] If $U\subseteq \mathbb T^k$, $V\subseteq \mathbb T^r$ are open and $B(\bm\alpha;U)$, $B(\bm\beta;V)$ are nonempty Bohr neighborhoods, then $B((\bm\alpha,\bm\beta);U\times V)$ is a nonempty Bohr neighborhood of rank $r+k$.
  \end{enumerate}
  \end{lemma}

\begin{proof}
We prove the contrapositive of Part (i).  Assuming $\overline{\langle (\bm\alpha,\bm\beta) \rangle}\neq \overline{\langle \bm\alpha\rangle}\times \overline{\langle \bm\beta\rangle}$, the former must be properly contained in the latter, since $\langle \bm\alpha,\bm\beta\rangle \subseteq \langle \bm\alpha\rangle\times \langle \bm\beta\rangle$.  This means there is an additive character of $\mathbb T^r\times \mathbb T^k$ vanishing on $\overline{\langle (\bm\alpha,\bm\beta) \rangle}$ and not on $\overline{\langle \bm\alpha\rangle}\times \overline{\langle \bm\beta\rangle}$.  In other words, there exists $(n_1,\dots, n_r,m_1,\dots,m_k)\in \mathbb Z^{r+k}$ such that $\sum_{i,j} n_i\alpha_i + m_j\beta_j=0 \in \mathbb T$, and at least one of $\sum n_i\alpha_i$, $\sum m_j\beta_j$ is nonzero.  Together this implies both $\sum n_i\alpha_i$ and $\sum m_j\beta_j$ are nonzero.  Thus there is an integer $N$ such that $\sum n_i\tilde{\alpha}_i = N - \sum m_j\tilde{\beta}_j$, meaning the respective spans of $\{\tilde{\alpha}_1,\dots,\tilde{\alpha}_r\}$ and $\{\tilde{\beta}_1,\dots,\tilde{\beta}_d,1\}$ intersect nontrivially.

To prove Part (ii), assume $B(\bm\alpha;U)$ and $B(\bm\beta;V)$ are both nonempty.  Note that $B((\bm\alpha,\bm\beta);U\times V)\neq \varnothing$  if and only if the intersection $Y:=(U \times V) \cap \overline{\langle (\bm\alpha,\bm\beta)\rangle}$ is nonempty (since $U\times V$ is open).  The same observation implies $\overline{\langle \bm\alpha\rangle}\cap U$ and $\overline{\langle \bm\beta\rangle}\cap V$ are both nonempty, by the hypothesis of Part (ii). The set $Y$ is therefore nonempty, and this yields the conclusion.
\end{proof}

\begin{proof}[Proof of Lemma \ref{lem:BHiskBohrDense}]
The first assertion of the lemma states that if $k<d\in \mathbb N$, $\varepsilon>0$, and $BH$ is a proper Bohr-Hamming ball with rank $d$ and radius $(k,\varepsilon)$ and $B$ is a nonempty Bohr neighborhood with rank $k$, then $BH\cap B$ contains a Bohr neighborhood with rank $d$.  To prove this, fix such $BH$ and $B$, write $\bm\alpha = (\alpha_1,\dots,\alpha_d)$ for the frequency determining $BH$, and write $B$ as $\{n\in \mathbb Z: n\bm\beta \in V\}$, where $V\subseteq \mathbb T^k$ is open and $\bm\beta=(\beta_1,\dots,\beta_k)\in \mathbb T^k$.

Let $W$, $Y$, and $Z$ denote $\operatorname{span}(\{\tilde{\alpha}_i\}\cup\{1\})$, $\operatorname{span}\{\tilde{\beta}_i\}$, and $\operatorname{span}(\{\tilde{\alpha}_i\}\cup\{\tilde{\beta}_i\}\cup \{1\})$, respectively.  Our assumption that $BH$ is proper means that $\bm\alpha$ is a generator, so $W$ has dimension $d+1$, and the dimension of $Z$ is at least $d+1$.

  We select $d-k$ elements $\tilde{\alpha}_{i_1},\dots,\tilde{\alpha}_{i_{d-k}}$ of the $\tilde{\alpha}_i$ so that $\operatorname{span}\{\tilde{\alpha}_{i_j}\}\cap \operatorname{span}(\{\tilde{\beta}_i\}\cup\{1\})=\{0\}$. To make this selection, first choose a basis $\mathcal B$ for $Y$ from among the $\tilde{\beta}_i$, then extend $\mathcal B$ to a basis $\mathcal B'$ for $Z$ by adjoining elements of $\{\tilde{\alpha}_i\}\cup\{1\}$.  This basis must contain at least $d+1$ elements, since $Z$ has dimension $\geq d+1$, so $\mathcal B'$ contains at least $d-k$ elements $\tilde{\alpha}_{i_1},\dots,\tilde{\alpha}_{i_{d-k}}$ of the $\tilde{\alpha}_i$.  Since $\mathcal B'$ is linearly independent and the $\tilde{\alpha}_{i_j}$ are disjoint from $\mathcal B \cup\{1\}$, we get  that $\operatorname{span}\{\tilde{\alpha}_{i_j}\}\cap \operatorname{span}(\{\tilde{\beta}_i\}\cup \{1\})$ is trivial.

Let $\bm\alpha'=(\alpha_{i_1},\dots,\alpha_{i_{d-k}})\in \mathbb T^{d-k}$, and let $C$ be the basic Bohr neighborhood $B(\bm\alpha';U)$, where $U$ is the $\varepsilon$-ball around $0$ in $\mathbb T^{d-k}$. Comparing their respective definitions, we see that $BH$ contains $C$; furthermore $C$ is nonempty, as $0\in C$.  Thus, $BH\cap B$ contains $C\cap B=B((\bm\alpha',\bm\beta);U\times V)$.  The latter set is a Bohr neighborhood with rank $d$ which, by Lemma \ref{lem:RelativeKronecker}, is nonempty.

  The second assertion of Lemma \ref{lem:BHiskBohrDense} follows from the first: if $S$ is $d$-Bohr dense and $B$ is a Bohr neighborhood with rank $k$, then $S\cap (BH\cap B)\neq \varnothing$, by virtue of the fact that $BH\cap B$ contains a nonempty Bohr neighborhood of rank $d$.  Thus $(S\cap BH)\cap B\neq \varnothing$ for every Bohr neighborhood $B$ with rank $k$, so $S\cap BH$ is $k$-Bohr dense.  \end{proof}

\section{Proof of Lemmas \ref{lem:nonrecurrenceForms} and \ref{lem:DensityCompactness}}\label{sec:Appendix}

 Lemma \ref{lem:nonrecurrenceForms} says that for $\delta>0$ and $S\subseteq \mathbb Z$, the following conditions are equivalent.
  \begin{enumerate}
    \item[(i)] There is a measure preserving system $(X,\mu,T)$ and $D\subseteq X$ with $\mu(D)>\delta$ such that $\mu(D\cap T^sD)=\varnothing$ for all $s\in S$.

    \item[(ii)] There exists $A\subseteq \mathbb Z$ with $d^*(A)>\delta$ such that $(A-A)\cap S=\varnothing$.

    \item[(iii)] There is a $\delta'>\delta$ such that for all $n\in \mathbb N$ there exist $A_n\subseteq \{0,\dots,n-1\}$  with $|A_n|\geq \delta'n$ and $(A_n-A_n)\cap S=\varnothing$.
  \end{enumerate}

\begin{proof}   To prove (i)$\implies$(ii), let $\delta>0$, $S\subseteq \mathbb Z$, $(X,\mu,T)$, and $D$ satisfy (i).  We will find  $A\subseteq \mathbb Z$ with $d^*(A)>\delta$ and $A\cap (A+S)=\varnothing$.
	
Write $1_D$ for the characteristic function of $D$.  By the pointwise ergodic theorem, the limit
	\begin{equation}\label{eqn:PWErgodic}
	F(x):=\lim_{N\to \infty} \frac{1}{N}\sum_{n=0}^{N-1} 1_D(T^nx)
	\end{equation}
	exists for $\mu$-almost every $x$.   The dominated convergence theorem implies  $\int F\, d\mu=\int 1_D\, d\mu$, so the limit on the right hand side of (\ref{eqn:PWErgodic}) is greater than $\delta$ for some $x\in X$.  Fixing such $x$ and setting
\[A:=\{n: T^nx\in D\},\] we have $\lim_{N\to \infty} \frac{|A\cap \{0,\dots,N-1\}|}{N}=F(x)$, so $d^*(A)>\delta$.  	
To prove that $(A-A)\cap S=\varnothing$, note that $m\in A\cap (A+s)$ if and only if $T^mx\in D$ and $T^{m-s}x\in D$, so $T^m x\in D\cap T^sD$.  Our hypothesis that $D\cap T^sD=\varnothing$ for all $s\in S$ then implies $A\cap (A+s)=\varnothing$ for all $s\in S$, meaning $(A-A)\cap S=\varnothing$.

To prove (ii)$\implies$(iii), assume $A\subseteq \mathbb Z$ has $d^*(A)=\delta''>\delta'>\delta$.  Then there are intervals $I_k$ with $|I_k|\to \infty$ such that $|A\cap I_k|\geq \delta'|I_k|$ for all sufficiently large $k$.

 Fix $n\in \mathbb N$, and write $I_k$ as a union of mutually disjoint intervals $J_{k,1},\dots,J_{k,r}$ of length $n$ together with one (possibly empty) interval $J_{k,0}$ of length at most $n$.  Observe that  $|I_k|/r\geq n$ under this arrangement.  Then
 \[|A\cap I_k|=\sum_{i=0}^r |A\cap J_{k,i}|\geq \delta'|I_k|,\] so $\sum_{i=1}^r |A\cap J_{k,i}|\geq \delta'|I_k| - n$.  This implies that for some $i\leq r$, $|A\cap J_{k,i}| \geq \frac{1}{r}(\delta'|I_k|-n)\geq \delta'n -\frac{n}{r}$.  Since $|A\cap J_{k,i}|$ is integer valued and $n/r\to 0$ as $k\to\infty$, we get that $|A\cap J_{k,i}|\geq \delta'n$ for sufficiently large $k$.  We have thus found an interval $J=J_{k,i}$ of length $n$ with $|A\cap J|\geq \delta'n$. We let $A_n= (A\cap J)-\min(J)$, so that $A\subseteq \{0,\dots,n-1\}$, and $A_n-A_n \subseteq A - A$. The assumption that $(A-A)\cap S=\varnothing$ allows us to conclude (iii).

Finally we prove (iii) $\implies$ (i).  Let $\delta'>\delta>0$ and let $A_n\subseteq \{0,\dots,n-1\}$ have $|A_n|\geq \delta' n$. Consider the topological space $X:=\{0,1\}^\mathbb Z$ with the product topology, so that $X$ is a compact metrizable space. Let $T$ be the continuous transformation $T:X\to X$ given by $(Tx)(m)=x(m+1)$ (commonly known as the left shift).

  Let $E$ be the clopen set $\{x\in X: x(0)=1\}$. For each $n$, let $y_n=1_{A_n}\in X$, meaning $y_n$ is the characteristic function of $A_n$, viewed as an element of $X$.  Note that
\begin{enumerate}
  \item[(a)] $T^{m}y_n\in E$ if and only if $m\in A_n$, and

  \item[(b)] $T^{m}y_n\in E\cap T^k E$ if and only if $m\in A_n$ and $m-k\in A_n$, meaning $k\in A_n-A_n$.  In particular, if $k\notin A_n-A_n$, then $T^my_n\notin E\cap T^{k}E$ for all $m\in \mathbb Z$.
\end{enumerate}
For $x\in X$, let $\delta_x$ denote the Dirac probability measure concentrated at $x$.

Let $\mu_n = \frac{1}{n}\sum_{k=0}^{n-1} \delta_{T^{k}y_n}$, so that each $\mu_n$ is a Borel probability measure on $X$.  Note that $\mu_n(E):=\frac{1}{n}|\{k\in \mathbb N:T^k y_n\in E\}|=|A_n|/n$, by (a) above, so $\mu_n(E)\geq \delta'$ for each $n$.

  Let $\mu$ be a weak$^*$ limit of the $\mu_n$. It is easy to verify that $\mu$ is $T$-invariant, by proving $\int f\,d\mu=\int f\circ T\, d\mu$ for every continuous $f$: write $\int f\, d\mu - \int f\circ T\, d\mu$ as a limit of a subsequence of
  \[I_n:= \frac{1}{n}\sum_{k=0}^{n-1}f(T^k y_n)-f(T^{k+1}y_n),\] which converges to $0$ by cancellation and boundedness of $f$.

  We claim that $(X,\mu,T)$ is a measure preserving system satisfying (i) in the statement of the lemma.   To see this, first note that $\mu(E)\geq \delta'$, since $\mu_n(E)>\delta'$ for each $n$.  Fixing $s\in S$ (which is disjoint from $A_n-A_n$ by hypothesis), we have $\mu_n(E\cap T^sE)=0$ for each $n$, by observation (b).  Thus $\mu(E\cap T^sE)=0$ for all $s\in S$. Now let $D=E\setminus \bigcup_{s\in S} T^s E$, so that $\mu(D)=\mu(E)\geq \delta$.  Then $D\cap T^sD\subseteq D\cap T^sE$ since $D\subseteq E$.  Also $D$ is disjoint from $T^s E$, so we have $D\cap T^sD=\varnothing$ for every $s\in S$.  \end{proof}

\noindent Recall Lemma \ref{lem:DensityCompactness}: if $0\leq \delta<\delta'$ and every finite subset of $S\subseteq \mathbb Z$ is $\delta'$-nonrecurrent, then $S$ is $\delta$-nonrecurrent.

\begin{proof}[Proof of Lemma \ref{lem:DensityCompactness}]
Suppose $S\subseteq \mathbb Z$, $0\leq \delta<\delta'$, and that every finite subset of $S$ is $\delta'$-nonrecurrent.  Write $S$ as an increasing union $\bigcup_{k\in \mathbb N} S_k$ of finite sets $S_k$.  For each $k$, choose a set $C_k\subseteq \mathbb Z$ with $d^*(C_k)>\delta'$ such that $(C_k-C_k)\cap S_k=\varnothing$.

We will find a sequence of sets $A_n\subseteq \{0,\dots, n-1\}$ such that $|A_n|\geq \delta'n$ and for each $n$, infinitely many of the $C_k$ contain a translate $A_n+t_{k,n}$ of $A_n$.  Under these conditions, we have $A_n-A_n\subseteq C_k-C_k$ for such $k$, so that $(A_n-A_n)\cap S_k=\varnothing$ for infinitely many $k$.  Since the $S_k$ are increasing and exhaust $S$, this implies $(A_n-A_n)\cap S=\varnothing$, whereby part (iii) of Lemma \ref{lem:nonrecurrenceForms} implies $S$ is $\delta$-nonrecurrent.

We find the sets $A_n$ by fixing $n$ and choosing, for each $k$, an interval $I_k=[t_k,t_k+n-1]$ with length $n$ such that $|I_k\cap C_k|\geq \delta'n$, just as in the proof of (ii)$\implies$(iii) in Lemma \ref{lem:nonrecurrenceForms}.  Letting $C_k'=(I_k\cap C_k)-t_k$, we see that $C_k'\subseteq \{0,\dots,n-1\}$.  There are only finitely many subsets of $\{0,\dots, n-1\}$, so there is an infinite collection of indices $k$ such that the $C_k'$ are mutually identical for these $k$.  We let $A_n$ be one of these $C_k'$, so that $A_n+t_k\subseteq C_k$ for infinitely many $k$, as desired. \end{proof}

\section{Remarks and a problem}

F{\o}lner \cite{Folner} proved that if $A\subseteq \mathbb Z$ has $d^*(A)>0$, then $A-A$ contains a set $B\setminus Z$, where $B$ is a Bohr neighborhood of $0$ and $d^*(Z)=0$.  Kriz \cite{Kriz} constructed the first example  of a set $A\subseteq \mathbb Z$ having $d^*(A)>0$ such that $A-A$ does not contain a Bohr neighborhood of $0$.  Theorem \ref{thm:Main} shows that F{\o}lner's theorem cannot be improved to say that $A-A$ contains a Bohr neighborhood, even with the modification that the Bohr neighborhood may be around some nonzero $n$.

Our method is very similar to Kriz's, and to Ruzsa's  simplified version of Kriz's method presented in \cite{McCutcheonAlexandria,McCutcheonBook}: the Bohr-Hamming balls we consider are closely analogous to the embeddings of Kneser graphs used in \cite{Kriz}, and our Proposition \ref{prop:RohlinTd}  is an extreme modification of Lemma  3.2 in \cite{Kriz}.  Katznelson in \cite{Katznelson} showed that translates of Bohr-Hamming balls (absent the nomenclature) are $k$-Bohr recurrent but not $(k+1)$-Bohr recurrent.

Theorem \ref{thm:Strong} suggests the following problem.
\begin{problem}\label{prob:RelativeBohr}
  Prove that if $S\subseteq \mathbb Z$ is Bohr recurrent, then there is a set $S'\subseteq S$ such that $S'$ is Bohr recurrent and is not a set of measurable recurrence.
\end{problem}




\section*{Acknowledgments}

 The author owes thanks to Juan B\`es, Anh Le, Quentin Menet, Wenbo Sun, and especially Yunied Puig for their interest and encouragement regarding Problem \ref{q:Main}.  We thank Nishant Chandgotia and Anh Le for corrections, and we thank Benjy Weiss for pointing out the use of Bohr-Hamming balls in \cite{Katznelson}.  An anonymous referee suggested several improvements to exposition, which we incorporated gratefully.


\bibliographystyle{amsplain}

\providecommand{\bysame}{\leavevmode\hbox to3em{\hrulefill}\thinspace}
\providecommand{\MR}{\relax\ifhmode\unskip\space\fi MR }
\providecommand{\MRhref}[2]{%
  \href{http://www.ams.org/mathscinet-getitem?mr=#1}{#2}
}
\providecommand{\href}[2]{#2}

%
%

\begin{dajauthors}
\begin{authorinfo}[jtg]
  John T. Griesmer\\
  Department of Applied Mathematics and Statistics\\
  Colorado School of Mines\\
  Golden, Colorado USA\\
  griesmer\imageat{}mines\imagedot{}edu \\
\end{authorinfo}
\end{dajauthors}

\end{document}